\documentclass[11pt,letterpaper,reqno]{amsart}
\usepackage{amssymb,amsmath,amsthm,amsfonts}
\usepackage{tikz}
\usepackage{verbatim}

\addtolength{\hoffset}{-1.2cm} \addtolength{\textwidth}{2.5cm}
\addtolength{\voffset}{-1cm} \addtolength{\textheight}{2cm}

\newtheorem{theorem}{Theorem}
\newtheorem{lemma}[theorem]{Lemma}
\newtheorem{proposition}[theorem]{Proposition}

\newcommand{\R}{\mathbb{R}}
\newcommand{\F}{\boldsymbol{F}}
\newcommand{\Z}{\mathbb{Z}}
\newcommand{\C}{\mathcal{C}}
\renewcommand{\S}{\mathcal{S}}
\newcommand{\D}{\mathcal{D}}
\newcommand{\T}{\mathcal{T}}
\newcommand{\M}{\mathcal{M}}
\newcommand{\1}{\mathbf{1}}
\renewcommand{\L}{\mathrm{L}}

\numberwithin{equation}{section}


\begin{document}

\title{$\L^p$ estimates for a  singular entangled quadrilinear form }
\author{Polona Durcik}
\address{Mathematisches Institut, Universit\"at Bonn
 Endenicher Allee 60, 53115 Bonn, Germany}
\email{durcik@math.uni-bonn.de}
\date{\today}

\subjclass[2010]{Primary 42B15; Secondary 42B20.}

\begin{abstract} We prove $\L^p$ estimates for a continuous version of a dyadic quadrilinear form  introduced by Kova\v{c} in \cite{kovac:tp}.  
This improves the range of exponents from the prequel  \cite{pd:L4} of the present paper.
\end{abstract}
\maketitle
\section{Introduction}
This article  is a continuation of \cite{pd:L4}. We are  concerned with a quadrilinear singular integral form   involving the    {\em entangled} product of four functions  on $\R^2$ 
\begin{align*} 
\F(F_1,F_2,F_3,F_4)(x,y,x',y'):=
F_1(x,y)F_2(x',y)F_3(x',y')F_4(x,y').
\end{align*} 
For Schwartz  functions $F_j\in \S (\R^2)$, the form   is given by
\begin{align*}
\Lambda(F_1,F_2,F_3,F_4):=  \int_{\R^2}  \widehat{\F} (\xi,\eta,-\xi,-\eta)m(\xi,\eta) d\xi d\eta, 
\end{align*}
where  $\F:=  {\F}(F_1,F_2,F_3,F_4)$ and    $m$  is a bounded function on $\R^2$,   smooth away from the origin. For all   multi-indices $\alpha$ up to some large finite order  it satisfies\footnote{We write $A\lesssim B$ if there is an absolute constant $C>0$ such that $A\leq C B$. If $P$ depends on a set of parameters $P$,   we write $A\lesssim_P B$. We write $A\sim B$ if both $A\lesssim B$ and $B\lesssim A$. }
\begin{align*}
 |\partial^\alpha m(\xi,\eta)|\lesssim (|\xi|+|\eta|)^{-|\alpha|}. 
\end{align*} 
In \cite{pd:L4} it is shown that
\begin{align}\label{def:main}
|\Lambda(F_1,F_2,F_3,F_4)|\lesssim \|F_1\|_{\L^4(\R^2)}\|F_2\|_{\L^4(\R^2)}\|F_3\|_{\L^4(\R^2)}\|F_4\|_{\L^4(\R^2)}.
\end{align}
 Our present goal is to prove $\L^p$ estimates for $\Lambda$ in a larger range of exponents.
\begin{theorem}\label{thm:main}
For $F_1,F_2,F_3,F_4\in \S(\R^2 )$, the quadrilinear form $\Lambda$ satisfies  
\begin{align*}
|\Lambda(F_1,F_2,F_3,F_4)|\lesssim_{(p_j)}  \|F_1\|_{\L^{p_1}(\R^2)}\|F_2\|_{\L^{p_2}(\R^2)}\|F_3\|_{\L^{p_3}(\R^2)}\|F_4\|_{\L^{p_4}(\R^2)}
\end{align*}
whenever $\sum_{j=1}^4\frac{1}{p_j}=1$ and $2<p_j \leq \infty$ for all $j$.
\end{theorem}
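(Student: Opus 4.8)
\emph{Overall plan and Step~1 (reduction to scales).} The plan is to re-run the ``twisted paraproduct'' machinery underlying \eqref{def:main}, but to keep track of which norm of each $F_j$ actually enters, so as to free the argument from its $\L^4$-based (purely $\L^2$/orthogonality) incarnation. First decompose $m=\sum_{k\in\Z}m_k$ with $m_k$ supported in $|(\xi,\eta)|\sim 2^k$ and $|\partial^\alpha m_k|\lesssim 2^{-k|\alpha|}$, and split $\Lambda=\sum_k\Lambda_k$ accordingly. The isotropic dilation $(x,y)\mapsto(2^{-k}x,2^{-k}y)$ leaves $\F$ covariant, and since $\sum_j p_j^{-1}=1$ the factor $2^{2k}$ from four simultaneous dilations cancels the $2^{-2k}$ produced by rescaling $m_k$ to unit frequency; hence each $\Lambda_k$ is, up to harmless rescaling, a fixed single-scale form, and in particular $|\Lambda_k(F_1,\dots,F_4)|\lesssim\prod_j\|F_j\|_{\L^{p_j}}$ uniformly in $k$. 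This uniform bound is not summable in $k$ --- all four inputs sit at a common frequency scale, the usual paraproduct obstruction --- so the sum over $k$ must be treated by a telescoping identity, not termwise.

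\emph{Step~2 (telescoping and Gauss--Green).} Following \cite{pd:L4}, in the spirit of Kova\v{c} \cite{kovac:tp}, one introduces a one-parameter family of ``local'' quantities built from smooth averages of $F_1F_2F_3F_4$ over squares at scale $t>0$, adapted to the entangled pairings $\{1,3\}$ and $\{2,4\}$, and expresses $\sum_k\Lambda_k$ --- up to boundary contributions that vanish by the Schwartz hypothesis --- via a Gauss--Green (Stokes) integration over the half-space $\{(x,y,x',y',t):t>0\}$. This rewrites $\Lambda$ as a sum of (i) \emph{paraproduct errors}: forms in which at least one input carries a localized frequency, so that, after using the remaining structure, one is left with a classical Coifman--Meyer paraproduct acting in one coordinate composed with an average or a Littlewood--Paley piece in the other; and (ii) a single \emph{main term}, a nonnegative quadratic object of the schematic form $\int_0^\infty\!\int_{\R^4}\bigl|\partial_t A_t(F_1,F_3)\bigr|\,\bigl|\partial_t A_t(F_2,F_4)\bigr|\,\tfrac{dt}{t}$, where $A_t$ is the scale-$t$ average attached to the relevant entangled pair.

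\emph{Steps~3--4 (errors, then the main term).} The forms in (i) obey H\"older-scaled $\L^p$ estimates for all exponents in $(1,\infty]$ by the Coifman--Meyer paraproduct theorem together with the $\L^p(\R^2)$ boundedness of the Hardy--Littlewood maximal operator and of Littlewood--Paley square functions, and are therefore admissible on the whole region of Theorem~\ref{thm:main}. For (ii), the Cauchy--Schwarz inequality in $(t,x,y,x',y')$ bounds the main term by the geometric mean of $\int_0^\infty\!\int|\partial_t A_t(F_1,F_3)|^2\,\tfrac{dt}{t}$ and the analogous expression for $\{2,4\}$; each is a square function of \emph{two} entangled inputs, and it is here that $p_j>2$ is forced: each $F_j$ enters such an expression only through a quadratic quantity (of $|F_j|^2$-type, or via a bilinear square/maximal operator applied to the pair), and controlling it by $\|F_j\|_{\L^{p_j}}$ amounts to running a maximal function or a square function at integrability exponent $p_j/2>1$, i.e.\ to the $\L^{p_i}\times\L^{p_j}$ boundedness of the relevant one-parameter bilinear square functions on $\R^2$, which holds precisely when $p_i,p_j>2$. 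Collecting (i) and (ii) and applying H\"older in the scale variable yields the estimate for the given tuple; since every auxiliary operator invoked is bounded on the pertinent $\L^{p_j}$ for each $2<p_j\le\infty$, the argument runs for all admissible tuples directly (a routine multilinear interpolation, anchored at \eqref{def:main} and at the faces with one $p_j=\infty$, is an alternative route to the full open polytope).

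\emph{Main obstacle.} The crux is the main term: extracting it cleanly from the telescoping identity and then estimating it \emph{uniformly} over all admissible tuples, rather than only at $\L^4$ where it collapses to a Plancherel computation. Proving the requisite bilinear square-function and maximal estimates on $\R^2$ all the way down to exponents approaching $2$, and verifying that $p>2$ is exactly the threshold for them in the form in which they occur, is the heart of the improvement over \cite{pd:L4}.
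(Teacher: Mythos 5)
The decisive steps of your outline are not available as black boxes, and the point at which you defer the difficulty is exactly where the new idea is needed. Your Steps 2--4 are in essence the \emph{global} argument of \cite{pd:L4}: telescope in the scale $t$, apply Cauchy--Schwarz to the main term, and control the resulting quadratic objects. But after a global Cauchy--Schwarz each $F_j$ enters only through an $\L^2$-type quantity, and Plancherel/square-function reasoning then yields exactly the $\L^4\times\L^4\times\L^4\times\L^4$ bound \eqref{def:main} and nothing more. To reach general tuples with $p_j>2$ you invoke the Coifman--Meyer theorem for the ``paraproduct errors'' and bilinear square-function/maximal estimates for the entangled pairs ``which hold precisely when $p_i,p_j>2$''. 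Neither is legitimate here: the entangled form does not decouple into classical Coifman--Meyer paraproducts (if it did, even $\Lambda_1$ would not have required Kova\v{c}'s induction-on-scales argument), and the bilinear square-function bounds you postulate for the entangled pairings below $\L^4$ are not known results one can cite --- they are essentially equivalent in difficulty to the theorem being proved. Your closing paragraph concedes precisely this, so the proposal is a plan with the key step missing rather than a proof; moreover, interpolating from \eqref{def:main} and the faces with one $p_j=\infty$ is circular, since those face estimates are themselves part of what must be proved.

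The paper takes a different route that avoids any global bilinear square-function input. Theorem \ref{thm:main} is reduced, via truncation and restricted-type interpolation \cite{th:wpa}, to the restricted-type estimate of Theorem \ref{thm:GRTmain}; an exceptional set built from a quadratic Hardy--Littlewood maximal function and a stopping time organize the Whitney boxes into convex trees; on each tree one proves the single-tree estimate of Proposition \ref{prop:single-tree} in terms of the localized size $M(F_j,\T)$, and the trees are summed using the maximal theorem (this localization of the $\L^2$-type size is also what produces, and confines one to, the range $p_j>2$). The genuinely new technical content is the \emph{local} telescoping of Proposition \ref{prop:tel}: since the Fourier-analytic identity of \cite{pd:L4} is global, the functions are first restricted to the sets $T_k$, and the resulting error and boundary terms are controlled through the boundary-counting Lemma \ref{lemma:bdry} (after \cite{mmt:uniform}) together with Lemmas \ref{lemma:error-Tc} and \ref{lemma:bdry-term}. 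None of this localization machinery appears in your outline, and without it --- or an actual proof of the bilinear square-function estimates you assume --- the argument does not close.
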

This theorem  is a consequence of  the    restricted type estimates   given by Theorem \ref{thm:GRTmain} below. 
By the decomposition performed in \cite{pd:L4},    it suffices to prove Theorem \ref{thm:main} for  $m$ reduced to a single cone in the frequency plane $(\xi,\eta)$. More precisely, it is enough to consider
the form 
  \begin{align}\label{form:enough-to-consider}
 \int_0^\infty   {\mu}_t  \int_{\R^2}  \widehat{\boldsymbol{F}}(\xi,\eta,-\xi,-\eta) \widehat{\varphi^{(u)}}(t\xi)  \widehat{\psi^{(v)}}(t\eta)\widehat{\varphi^{(-u)}}(-t\xi)\widehat{\psi^{(-v)}}(-t\eta)d\xi d\eta \frac{dt}{t}  
\end{align}
 where  $\varphi^{(u)}(x)=(1+|u|)^{-25}\varphi(x-u)$ and $\psi^{(v)}(x)=(1+|v|)^{-10}\psi(x-v)$. The functions   
 $\varphi,\psi \in \S(\R)$ are real-valued and $\psi$ is such that  $(\int_\eta^\infty |\widehat{\psi}(\tau)|^2    d\tau/\tau )^{1/2}$  belongs to $\S(\R)$,  $u,v\in\R$ and ${\mu}_t$ 
are measurable coefficients  with   $|\mu_t|\leq 1$. 
We remark that  the decomposition is  not explicitly stated in this manner  in \cite{pd:L4}, but it follows by a minor rephrasing of the arguments. The estimate for \eqref{form:enough-to-consider} will  be uniform in the parameters $u,v$.

Since the integral of the Fourier transform of a Schwartz function  over a hyperplane in $\R^4$ equals the integral of  the function itself over the perpendicular hyperplane, we  can express
 the form \eqref{form:enough-to-consider}  as 
\begin{align*} 
  \int_0^\infty {\mu}_t   \int_{\R^2}   {\boldsymbol{F}} *
[{\varphi^{(u)}}\otimes {\psi^{(v)}} \otimes {\varphi^{(-u)}} \otimes{\psi^{(-v)}}]_t(p,q,p,q)dpdq \frac{dt}{t} ,
\end{align*}
where $(f_1\otimes\dots \otimes f_n)(x_1,\dots x_n):=f_1(x_1)\dots f_n(x_n)$ and $[f]_t(x_1,\dots,x_n):=t^{-n}f(t^{-1}x)$.  We   truncate in the scale $t$, that is, for $N>0$ we consider     $\Lambda_{\varphi,\psi}^N = \Lambda_{\varphi,\psi,\mu,u,v}^N$ given by
\begin{align*} 
\Lambda_{\varphi,\psi}^N(F_1,F_2,F_3,F_4):= \int_{2^{-N}}^{2^N} {\mu}_t   \int_{\R^2}   {\boldsymbol{F}} *
[{\varphi^{(u)}}\otimes {\psi^{(v)}} \otimes {\varphi^{(-u)}} \otimes{\psi^{(-v)}}]_t(p,q,p,q)dpdq \frac{dt}{t}, 
\end{align*}
which is well defined for bounded measurable functions $F_j$ with finite measure support.  
We have the following analogue of Theorem \ref{thm:main} for  $\Lambda_{\varphi,\psi}^{N}$. 
\begin{theorem}
\label{thm:truncated}
For bounded measurable functions $F_1,F_2,F_3,F_4$ with finite measure support, the quadrilinear form $\Lambda_{\varphi,\psi}^{N}$ satisfies the estimate 
\begin{align}\label{est:lambda-eps-N}
|\Lambda_{\varphi,\psi}^{N}(F_1,F_2,F_3,F_4)|\lesssim_{(p_j)}  \|F_1\|_{\L^{p_1}(\R^2)}\|F_2\|_{\L^{p_2}(\R^2)}\|F_3\|_{\L^{p_3}(\R^2)}\|F_4\|_{\L^{p_4}(\R^2)}
\end{align}
whenever $\sum_{j=1}^4\frac{1}{p_j}=1$ and $2<p_j \leq \infty$ for all $j$.
\end{theorem}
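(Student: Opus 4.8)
The plan is to deduce Theorem~\ref{thm:truncated} from the generalized restricted type estimates of Theorem~\ref{thm:GRTmain} together with multilinear interpolation; concretely, the first step is to reduce to the following assertion, with a constant uniform in $N$ (and in $\mu,u,v$): for exponents as in the statement, for each distinguished index $j_{0}\in\{1,2,3,4\}$, and for all measurable sets $E_{1},\dots,E_{4}\subset\R^{2}$ of finite measure, there is a major subset $E_{j_{0}}'\subseteq E_{j_{0}}$ with $|E_{j_{0}}'|\ge\tfrac12|E_{j_{0}}|$ such that
\begin{equation*}
|\Lambda_{\varphi,\psi}^{N}(F_{1},F_{2},F_{3},F_{4})|\lesssim_{(p_{j})}\prod_{j=1}^{4}|E_{j}|^{1/p_{j}}
\end{equation*}
whenever $|F_{j}|\le\1_{E_{j}}$ for $j\ne j_{0}$ and $|F_{j_{0}}|\le\1_{E_{j_{0}}'}$. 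Every admissible tuple $(p_{j})$ can be reached by multilinear interpolation from restricted type estimates at finitely many corner tuples (those for which each exponent equals $2$ or $\infty$), so letting $j_{0}$ run over all indices and applying the restricted type interpolation theorem upgrades these bounds to the strong type estimate \eqref{est:lambda-eps-N}, the uniformity in $N$ being retained. (Passing to the limit $N\to\infty$ and undoing the reduction to \eqref{form:enough-to-consider} would in turn recover Theorem~\ref{thm:main}.)

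To prove the restricted type estimate I would first discretize $\Lambda_{\varphi,\psi}^{N}$ by a smooth partition of unity in the scale $t$, writing it as a finite sum over dyadic scales $2^{k}$, $|k|\lesssim N$, of spatial integrals of an entangled product of local pieces of the $F_{j}$ at scale $2^{k}$ --- a cancellative (mean-zero) Littlewood--Paley factor in the $\psi$-direction and a smooth averaging factor in the $\varphi$-direction. I would then run a corona/stopping-time decomposition: for each $j$ and each generation $n\in\Z$, sort the (scale, location) data according to the density of $E_{j}$, namely the average of $\1_{E_{j}}$ over a fixed dilate, in the relevant variable, of the spatial cube attached to that scale and location, into trees on which this density first exceeds $2^{-n}$. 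Collecting the trees whose density is abnormally large relative to the ratios $|E_{j}|/|E_{j_{0}}|$ produces an exceptional set of measure $<\tfrac12|E_{j_{0}}|$, which I remove from $E_{j_{0}}$ to define $E_{j_{0}}'$. It then remains to establish, at each density level $n$, a \emph{tree estimate} bounding the contribution of a single tree and a \emph{counting estimate} bounding the total size of the level-$n$ trees, and then to sum over $n$.

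The tree estimate is where the entangled, non-tensor structure of $\F$ forces the argument to depart from standard multilinear paraproduct theory, and I expect it to be the main obstacle. Here I would adapt Kova\v{c}'s telescoping identity from \cite{kovac:tp} to the continuous, tree-localised setting: summing the local contributions that make up a tree along the scale variable and invoking the fundamental theorem of calculus rewrites the tree form as boundary terms at the extreme scales of the tree plus an error term that is a manifestly \emph{nonnegative} quadratic expression in local $\L^{2}$-masses of the $F_{j}$ --- schematically an entangled integral $\iint_{\R^{4}}\sigma_{1}(x,y)\sigma_{2}(x',y)\sigma_{3}(x',y')\sigma_{4}(x,y')$ with each $\sigma_{j}$ a localised square function of $F_{j}$. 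The boundary terms reduce to a restricted type bound for a simpler, non-entangled multilinear average (ultimately a product of maximal averages), while the positive error is split by two Cauchy--Schwarz inequalities in the pattern dictated by the entanglement; on a tree each $F_{j}$ is dominated by $\1_{E_{j}}$ with density at most the stopping level, so the resulting square-function masses satisfy both an $\L^{2}$ energy bound and a density ($\L^{\infty}$-type) bound, and interpolating between the two yields a genuine power of the density. The delicate point --- and the reason the $\L^{4}$ bound \eqref{def:main} on its own is insufficient --- is to organize these Cauchy--Schwarz steps, which are intrinsically an $\L^{4}$ mechanism, so that they still leave room to extract the density gain; a secondary subtlety is that the two spatial directions enter asymmetrically (cancellative $\psi$ versus non-cancellative $\varphi$), so the trees and the square functions must be set up with respect to the correct variable.

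Finally, the counting estimate: at density level $2^{-n}$ attached to $F_{j}$, the tops of the level-$n$ trees are sufficiently disjoint that the associated local $\L^{2}$-masses of $F_{j}$ obey a Bessel-type inequality, whence the total measure of those tops is $\lesssim 2^{n}|E_{j}|$. Inserting the tree estimate and this count into the sum over $n$ yields a geometric series; summing it and optimizing the thresholds that define the exceptional set produces the bound $\prod_{j}|E_{j}|^{1/p_{j}}$, with all implicit constants depending only on $(p_{j})$ and the fixed Schwartz functions $\varphi,\psi$, hence independent of $N$, $\mu$, $u$, $v$. This establishes the restricted type estimate and therefore, by interpolation, Theorem~\ref{thm:truncated}.
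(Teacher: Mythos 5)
Your overall architecture is the same as the paper's: deduce Theorem \ref{thm:truncated} from generalized restricted type estimates via the interpolation theory of \cite{th:wpa}, and prove those estimates by an exceptional-set/stopping-time decomposition into convex trees, a single-tree estimate obtained from a localized version of Kova\v{c}'s telescoping (with the error and boundary terms created by the spatial localization controlled separately, which is indeed the new difficulty compared with \cite{pd:L4}), and a maximal-function count of the tree tops summed as a geometric series. This matches Sections 2--4 of the paper in all essentials, including the role of the special hypothesis on $\psi$ and the two Cauchy--Schwarz steps.

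There is, however, a genuine gap in your interpolation step. You only posit restricted type estimates with nonnegative exponents $1/p_j\in[0,1/2)$ and claim that interpolating from corner tuples whose exponents are $2$ or $\infty$ reaches every admissible tuple. Restricted type estimates on the polytope $\{\alpha_j\ge 0,\ \alpha_j\le 1/2,\ \sum_j\alpha_j=1\}$ upgrade to strong type only at interior points of that polytope; a tuple with some $\alpha_j=1/p_j=0$ lies on its boundary, so the case $p_j=\infty$ in \eqref{est:lambda-eps-N} is not obtained this way (the passage from ``$|F_j|\le\1_{E_j}$ with a major subset removed from the largest set'' to genuine strong type bounds uniform in the supports breaks down exactly at that face). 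This is why the paper's Theorem \ref{thm:GRTmain} proves the restricted type bounds for the full range $-1/2\le\alpha_j\le 1/2$: the negative exponents correspond to quasi-Banach estimates for the dual operators, they make every target tuple with a zero entry an interior point of the region where restricted type holds, and then the machinery of \cite{th:wpa} yields all of $2<p_j\le\infty$. (Alternatively one can treat $p_j=\infty$ by a separate duality argument, as the paper sketches for Theorem \ref{thm:main} in the introduction, but some such ingredient must be added; your corner-tuple claim as stated delivers only $2<p_j<\infty$.) A minor further remark: the paper takes the major subset only in the slot of the set of maximal measure, which suffices once negative exponents are allowed, whereas you ask for an arbitrary distinguished index $j_0$; that is not wrong, but it is more than is needed and complicates the choice of thresholds in your exceptional set.
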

The bound \eqref{est:lambda-eps-N}   is independent of $N,u,v$. Approximating $F_j\in \S$  in $\L^{p_j}$ with smooth compactly supported functions, Theorem \ref{thm:truncated}  then implies Theorem \ref{thm:main}.  
By the multilinear interpolation and the restricted type theory discussed in \cite{th:wpa}, Theorem \ref{thm:truncated} is a   consequence of the following   (generalized) restricted type estimates. 
\begin{theorem}\label{thm:GRTmain}   For $j=1,2,3,4,$ let $E_j \subseteq \R^2$ be a set of finite measure. Let  $k$ be the largest index such that $|E_k|$ is maximal among the $|E_j|$.  Then there exists a subset 
 ${E}'_k\subseteq E_k$ with $2|{E}'_k| \geq |E_k|$, such that 
for any four measurable functions $F_j$ with\footnote{By $\1_{A}$ we denote the characteristic function of a set $A\subseteq  \R^2$. } $|F_j|\leq \1_{E_j}$ for all $j$ and $|F_k|\leq \1_{{E}'_k}$ we have  the estimate
\begin{align*}
|\Lambda_{\varphi,\psi}^{N} (F_1,F_2,F_3,F_4)|\lesssim |E_1|^{\alpha_1}|E_2|^{\alpha_2}|E_3|^{\alpha_3}|E_4|^{\alpha_4}
\end{align*}
whenever $\sum_{j=1}^4 \alpha_j=1$ and  $ -1/2 \leq  \alpha_j \leq 1/2$ for all $j$.
\end{theorem}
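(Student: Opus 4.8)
The plan is to prove the generalized restricted type estimate by a stopping-time / phase-space decomposition argument in the spirit of the dyadic model from \cite{kovac:tp} and its continuous counterpart in \cite{pd:L4}. First I would exploit the freedom in choosing the exceptional set: let $k$ be the largest index realizing the maximal measure, and define an exceptional set $\Omega$ as the union of the sets where a suitable maximal/square-function operator applied to $\1_{E_j}$ (normalized by $|E_j|^{-1}$) exceeds a large constant, so that $|\Omega|\le \tfrac12 |E_k|$; then set $E'_k = E_k \setminus \Omega$, which satisfies $2|E'_k|\ge |E_k|$. The point of this step is that away from $\Omega$ we gain the pointwise control on the relevant convolutions $F_j * [\varphi^{(u)}]_t$, $F_j*[\psi^{(v)}]_t$ that encodes the ``size'' and ``energy'' bounds we will need; because $|F_k|\le \1_{E'_k}$, every term in $\Lambda^N_{\varphi,\psi}$ effectively integrates against a function supported outside $\Omega$.

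Next I would run the main estimate. By symmetry considerations coming from the entangled structure $\F(F_1,F_2,F_3,F_4)(x,y,x',y')=F_1(x,y)F_2(x',y)F_3(x',y')F_4(x,y')$ — the same telescoping identity used in \cite{pd:L4} to derive \eqref{def:main} — I would reduce to bounding a positive ``local'' form that pairs a square-function-type expression in two of the functions (those convolved with $\psi^{(\pm v)}$, which carry cancellation in $\eta$) against $L^\infty$-type expressions in the other two (convolved with $\varphi^{(\pm u)}$). Concretely one rewrites the $t$-integral using the identity that $\int_0^\infty |\widehat\psi(t\eta)|^2\, dt/t$ is a bounded bump, and uses the hypothesis that $(\int_\eta^\infty|\widehat\psi(\tau)|^2 d\tau/\tau)^{1/2}\in\S$ to perform a Cauchy--Schwarz/telescoping step in the variable $q$. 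This produces, after Cauchy--Schwarz in $t$ and in the spatial variables, a product of two $L^2$-norms of ``conditional square functions'' times two $L^\infty$-norms; on the complement of $\Omega$ the $L^\infty$ factors contribute $\lesssim 1$ each (after the normalization) while the $L^2$ factors, unwound, are controlled by $|E_j|^{1/2}$ via a Bessel-type / Plancherel estimate combined with the single-tree or single-scale bounds. Tracking the normalizations gives exactly $|E_1|^{\alpha_1}\cdots|E_4|^{\alpha_4}$ at the corner exponents $\alpha_j\in\{-1/2,1/2\}$, and then one interpolates (or directly tracks a one-parameter family of Cauchy--Schwarz splittings) to reach all $\alpha_j\in[-1/2,1/2]$ with $\sum\alpha_j=1$.

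I expect the main obstacle to be the same one that limited the exponent range in \cite{pd:L4}: the entangled product does not factor as a tensor product, so one cannot simply apply known multilinear (bi)paraproduct bounds coordinatewise. The improvement to $\alpha_j\ge -1/2$ (equivalently $p_j>2$) rather than just the $L^4$ point requires a genuinely two-dimensional argument — presumably a more refined telescoping that extracts cancellation in \emph{both} the $\psi$-convolutions simultaneously and couples the two resulting square functions through a ``tensor-type'' or Gaussian/heat-kernel domination of the bump functions $\varphi,\psi$, as was developed for related entangled forms. The delicate bookkeeping will be to ensure that after this double telescoping the error terms are again of the same entangled type (so the argument can be iterated or closed), and that the exceptional set $\Omega$, which must be chosen \emph{before} the splitting, is large enough to absorb \emph{all} the bad contributions from whichever index $k$ turns out to be maximal — this is why the theorem is phrased with $k$ the largest maximal index and with the $2|E'_k|\ge|E_k|$ slack. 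Once the corner estimates are in place, the passage to the full range, the removal of the truncation $N$, and the deduction of Theorems \ref{thm:truncated} and \ref{thm:main} are routine given the interpolation machinery of \cite{th:wpa}.
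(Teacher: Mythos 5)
Your overall skeleton --- exceptional set built from a maximal function, a stopping-time decomposition of the Whitney boxes into trees, and a Kova\v{c}-style telescoping estimate on each tree --- is indeed the architecture of the actual proof. But the heart of the matter is missing. You propose to run ``the same telescoping identity used in \cite{pd:L4}''; that identity is derived on the Fourier side and is genuinely \emph{global} in the spatial variables, so it cannot be applied to the localized forms $\widetilde{\Theta}^{\T}_{\varphi,\psi}$ that the stopping time produces. The new content of the present argument is precisely a \emph{local} substitute: Proposition \ref{prop:tel}, proved by restricting the $F_j$ to the sets $T_k$, telescoping via the fundamental theorem of calculus in $t$, and then controlling the resulting error and boundary terms through Lemmas \ref{lemma:error-Tc} and \ref{lemma:bdry-term}, which in turn hinge on the boundary-counting Lemma \ref{lemma:bdry} ($\sum_k 2^{2k}\#\Delta(\T_k)\lesssim|R_\T|$) and on convexity of the trees. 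You flag that ``a more refined telescoping'' with ``delicate bookkeeping'' is ``presumably'' needed, but that is exactly the step you would have to supply; without it the single-tree estimate (Proposition \ref{prop:single-tree}), and hence the whole argument, does not close. Relatedly, the relevant ``size'' here is the localized $\L^2$-average $M(F_j,\T)$ forced by the box Cauchy--Schwarz step (Lemma \ref{lemma:a-size}); your exceptional set should therefore be built from the \emph{quadratic} maximal function of $|E_j|^{-1/2}\1_{E_j}$, not from $L^1$-normalized averages of $|E_j|^{-1}\1_{E_j}$, and $E'_k$ must remove the \emph{tripled} maximal dyadic squares $3R$, not merely the exceptional set itself: the tripling is what guarantees $\mathrm{dist}(S,\mathrm{supp}\,F_k)\gtrsim\ell(R)$ for squares $S\subseteq R\subseteq H$, producing the $2^{-8k}$ decay that makes the sum over squares inside the exceptional set converge. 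With $E'_k=E_k\setminus\Omega$ alone, that part of the argument fails.

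A second gap concerns the exponents. You plan to prove corner estimates with some $\alpha_j=-1/2$ and interpolate, claiming the corners follow by ``tracking the normalizations''; they do not. A bound of the form $\prod_j|E_j|^{1/2}$ does \emph{not} imply the bound with an exponent $-1/2$ when the corresponding $|E_j|$ is large, and you offer no mechanism (your ``Bessel/Plancherel'' step is unsubstantiated) for gaining a negative power. The paper's route is simpler and you should adopt it: since the estimate is uniform in $N$, a dyadic dilation normalizes the maximal measure to $1\le|E_k|\le 4$; then $|E_j|\le|E_k|\le 4$ together with $\alpha_j\le 1/2$ shows that the single estimate $|\Lambda^N_{\varphi,\psi}|\lesssim\prod_j|E_j|^{1/2}$ (equivalently $|\Lambda^N_{\varphi,\psi}(G_1,\dots,G_4)|\lesssim 1$ for $G_j=|E_j|^{-1/2}F_j$) already implies all exponent tuples with $-1/2\le\alpha_j\le 1/2$, $\sum_j\alpha_j=1$, with no interpolation inside Theorem \ref{thm:GRTmain} at all.
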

 Negative exponents $\alpha_j$ correspond to  quasi-Banach space estimates for the dual operators of  $\Lambda^N_{\varphi,\psi}$, for which one may consult  \cite{th:wpa}.   

Assuming Theorem \ref{thm:main}, we now mention  how to extend $\Lambda$ to a bounded operator on $\L^{p_1}\times \L^{p_2}\times \L^{p_3}\times \L^{p_4}$ whenever $p_j$ are  as in Theorem \ref{thm:main}. 
If $p_j<\infty$ for all $j$, this follows by density of $\S$ in $\L^{p_j}$. If $p_j=\infty$ for some $j$, we argue by duality. Note that   have at most one exponent equal to $\infty$. We sketch the argument when  $p_4=\infty$, the other instances following by   symmetry of the form.
We know that there is an operator $T$ mapping $\L^4 \times \L^4 \times \L^4$ to $\L^{4/3}$ such that
$$\Lambda(F_1,F_2,F_3,F_4)=\int T(F_1,F_2,F_3)F_4.$$ 
We claim that for $F_j\in \S$,
 $\|T(F_1,F_2,F_3)\|_{\L^1}\lesssim \|F_1\|_{\L^{p_1}}\|F_2\|_{\L^{p_2}}\|F_3\|_{\L^{p_3}}$. Then $\Lambda$ can be defined on $\S\times \S \times \S \times \L^\infty$  and   density arguments   yield  a bounded  extension on $\L^{p_1}\times \L^{p_2} \times \L^{p_3} \times \L^{\infty}$. 
To see the claim  we   write
$$\|T(F_1,F_2,F_3)\|_{\L^1([-M,M]^2)} =  \int  T(F_1,F_2,F_3)\vartheta$$
where $\vartheta$   is a modulation times $\1_{[-M,M]^2}$. Then we approximate  $\vartheta$ weakly in $\L^4$ with smooth compactly supported functions having   $\L^\infty$ norms uniformly bounded by $1$. Applying  Theorem \ref{thm:main} for the tuple  $(p_1,p_2,p_3,\infty)$   yields the assertion.   

Let us briefly comment on the form
  $\Lambda$. For more extensive motivation  we refer to \cite{pd:L4}. The  instance of $\Lambda$ which was first considered is  the trilinear form\footnote{In \cite{pd:L4} we called this form $T$, not to be interchanged  with the dual operator introduced above.}   $\Lambda_1(F_1,F_2,F_3):=\Lambda(F_1,F_2,F_3,1)$. It was   introduced by Demeter and Thiele \cite{demTh:BHT}. 
This trilinear form can also be seen as a simpler version of the twisted paraproduct     proposed by Camil Muscalu and sometimes one refers to it with that name as well. 

Boundedness of  $\Lambda_1$ was established by Kova\v{c} \cite{kovac:tp},  who first investigated a dyadic model   of   $\Lambda$ for a general function  $F_4$  by an induction on scales type argument. See also \cite{kovac:bf}. This led to an estimate for a dyadic version of  $\Lambda_1$ whenever $2<p_1,p_2,p_3<\infty$ and $1/{p_1}+1/{p_2}+1/{p_3}=1$. Then Kova\v{c}  passed to the bound for $\Lambda_1$  using the square functions of Jones, Seeger and Wright \cite{jsw}.  Bernicot's fiber-wise  Calder\'on-Zygmund decomposition \cite{bernicot:fw}    extended the  
    range of exponents  to $1<p_1,p_3<\infty$, $2<p_2\leq \infty$. The transition  to the continuous case and the extension of the exponent range  both relied   on the special structure  arising from   $F_4=1$.  

For the quadrilinear form  with   a general fourth function,  the   $\L^4$ estimate  \eqref{def:main}  was derived by adapting the induction  of scales techique by Kova\v{c} to the continuous setting. In the present    article we prove estimates in a larger range of exponents by  extending his method to   the continuous localized context.
 
By  a   classical stopping time argument,   Theorem \ref{thm:GRTmain} is reduced to estimating entangled    forms    of the type
\begin{align*}
  \int_{\Omega }  |   {\boldsymbol{F}} *
[{\varphi^{(u)}}\otimes {\psi^{(v)}} \otimes {\varphi^{(-u)}} \otimes{\psi^{(-v)}}]_t(p,q,p,q)|dpdq \frac{dt}{t}.
\end{align*} 
Here  $\Omega$ is a certain  local    region in the upper half space  with "regular" boundary.  Controlling such  objects with the technique from \cite{kovac:tp} requires an algebraic telescoping identity. In \cite{pd:L4},  its derivation   relies on an identity involving  the Fourier transform. The argument is of global nature and we cannot directly repeat it in the localized setting.

We obtain the desired telescoping element in  Proposition \ref{prop:tel} in Section \ref{sec:tel}. 
To  overcome the mentioned difficulty,  we  first restrict the functions $F_j$ to certain projections of the region $\Omega$.
This allows us to discard the spatial localization of the form and proceed in the manner of \cite{pd:L4}.  The issue in the described  process is  then in estimating    boundary terms,  representing   differences between   local and global objects. This requires certain control of the boundary and is carried out in  Lemma \ref{lemma:error-Tc} and Lemma \ref{lemma:bdry-term} below.
Our approach has been inspired by Muscalu, Tao and Thiele \cite{mmt:uniform}.

To conclude we remark that in general  we  do not know of any arguments which could extend the   range of exponents from Theorem \ref{thm:main} to $p_j\leq 2$.\\

  {\bf Acknowledgement.} {I would like to express my sincere  gratitude to  my advisor Prof. Christoph Thiele  for his guidance and support throughout this project.}


\section{Local telescoping} 
\label{sec:tel}
First let us  set up some notation. 
A {\em dyadic interval}   is a interval of the form $[2^km, 2^k(m+1)]$ for some $k,m\in \Z$.  We denote the set of all dyadic intervals by $\mathcal{I}$ and the set of all dyadic intervals of length $2^k$ by $\mathcal{I}_k$.
A {\em dyadic square} is the Cartesian product of two dyadic intervals of the same length. For a dyadic square $S$ we denote by $\ell(S)$ its sidelength.  We write $\D$ for the set of all dyadic squares and $\D_k$   for the set of all dyadic squares of sidelength $2^k$. 
Each   $S\in \mathcal{D}$ is divided into four congruent dyadic squares of half the sidelength, called the {\em children} of $S$. Conversely, each square in $\mathcal{D}$ has a unique {\em parent} in $\mathcal{D}$.  Given any two dyadic squares, either one is contained in the other or they are almost disjoint, by which we  mean that their intersection   has Lebesgue measure zero.  

As in \cite{kovac:tp}, we collect the squares   into units called {\em trees}. A finite collection   $\mathcal{\T}\subseteq \mathcal{D}$ is called  a {\em tree} if there exists a 
square $R_{\T}\in \mathcal{\T}$ called the {\em root}, satisfying $S\subseteq R_{\T}$ for every $S\in \mathcal{\T}$.  A tree is called {\em convex} if for all $S_1,\,S_2,\,S_3$ we have that $S_1\subseteq S_2\subseteq S_3$ and $S_1,\; S_3\in \mathcal{\T}$ imply $S_2\in \mathcal{\T}$. A {\em leaf} of $\mathcal{\T}$ is a dyadic square which is not contained in $\mathcal{\T}$, but its parent is. We denote the set of leaves of $\T$ by $\mathcal{L}(\T)$. 
Note that the leaves of a   convex tree   partition its root.
We split $\T$ into generations of squares of sidelength  $2^k$. For this we denote 
\begin{align*}
\mathcal{T}_k := \T\cap \D_k\;\;\;\mathrm{and}\;\;\;\T_k^c:=\D_k\setminus \T_k.
\end{align*}  
For the union of all squares in $\T_k$ we write
$$T_k:=  \bigcup_{S\in \T_k}S.$$
Observe that for a convex tree $\mathcal{\T}$ we have $T_k \subseteq T_{k'}$ if $k \leq k'$, $T_{k'}\neq \emptyset$.
The following lemma measures the "size" of the boundary of $T_k$. It estimates the cardinality  of dyadic points  
$$\Delta(\T_k):= \partial T_k \cap (2^k\Z\times 2^k\Z).$$
This is a variant of Lemma 4.8 from  \cite{mmt:uniform}. 
\begin{lemma} \label{lemma:bdry}
For any   convex tree $\mathcal{\T}$ we have
\begin{align*}
\sum_{k\in \Z}2^{2k}\#\Delta(\T_k)  \lesssim |R_{\T}|.
\end{align*}
\end{lemma}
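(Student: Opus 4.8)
The plan is to first replace the count of boundary lattice points by a count of boundary \emph{edges}, and then run an induction on scales in which the dyadic scaling produces a geometric gain. Write $2^{k_0}=\ell(R_{\T})$, so that $\T_k=\emptyset$ for $k>k_0$ and $\T_{k_0}=\{R_{\T}\}$; by convexity and finiteness of $\T$ there is a $k_1\le k_0$ with $\T_k\neq\emptyset$ precisely for $k_1\le k\le k_0$, and, as already observed above, $T_{k_1}\subseteq T_{k_1+1}\subseteq\cdots\subseteq T_{k_0}=R_{\T}$. For each $k$ let $\mathcal{E}_k$ be the collection of closed horizontal or vertical segments $e$ of length $2^k$ with endpoints in $2^k\Z\times 2^k\Z$ such that, of the two squares of $\D_k$ adjacent to $e$, exactly one belongs to $\T_k$. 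Then $\partial T_k=\bigcup_{e\in\mathcal{E}_k}e$, and since each point of $\Delta(\T_k)$ is an endpoint of some $e\in\mathcal{E}_k$ (at a boundary lattice point the four adjacent squares are neither all in $\T_k$ nor all outside, so two cyclically adjacent ones differ) while every $e$ has two endpoints, $\#\Delta(\T_k)\le 2\,\#\mathcal{E}_k$. It therefore suffices to prove $\sum_{k\in\Z}2^{2k}\,\#\mathcal{E}_k\lesssim|R_{\T}|$.

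The main step is the recursion
\begin{align}\label{eq:Ekrec}
2^{2k}\,\#\mathcal{E}_k\ \le\ C\,|T_{k+1}\setminus T_k|\ +\ \tfrac12\cdot 2^{2(k+1)}\,\#\mathcal{E}_{k+1}\qquad(k_1\le k<k_0),
\end{align}
with $C$ an absolute constant. To prove \eqref{eq:Ekrec}, fix $e\in\mathcal{E}_k$, let $S_1\in\T_k$ and $S_2\in\D_k\setminus\T_k$ be the two squares adjacent to $e$, and let $P_1,P_2\in\D_{k+1}$ be their parents. Since $k<k_0$ we have $S_1\neq R_{\T}$, hence $S_1\subsetneq P_1\subseteq R_{\T}$, and convexity of $\T$ forces $P_1\in\T_{k+1}$. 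Now split according to whether $P_2\in\T_{k+1}$. If $P_2\in\T_{k+1}$, then $S_2\subseteq P_2\subseteq T_{k+1}$, while $S_2\notin\T_k$ gives $|S_2\cap T_k|=0$, so $S_2$ contributes its full area $2^{2k}$ to $T_{k+1}\setminus T_k$; charging $e$ to $S_2$ and using that each square of $\D_k$ has only four edges and that distinct such $S_2$ are almost disjoint, one gets $2^{2k}\,\#\{e\in\mathcal{E}_k:P_2\in\T_{k+1}\}\le 4\,|T_{k+1}\setminus T_k|$. If instead $P_2\notin\T_{k+1}$, then $P_1\neq P_2$, and since $S_i$ is a child of $P_i$ the edge $e$ lies on $\partial P_1\cap\partial P_2$; as $P_1\subseteq T_{k+1}$ while $P_2\notin\T_{k+1}$ forces $\mathrm{int}(P_2)\cap T_{k+1}=\emptyset$, the relative interior of $e$ lies in $T_{k+1}\setminus\mathrm{int}(T_{k+1})=\partial T_{k+1}$. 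Since $\mathrm{int}(e)$ is connected, contains no point of $2^{k+1}\Z\times 2^{k+1}\Z$, and $\partial T_{k+1}=\bigcup_{e'\in\mathcal{E}_{k+1}}e'$ with distinct $e'$ meeting only in scale‑$2^{k+1}$ lattice points, $e$ must be one of the two halves of a single $e'\in\mathcal{E}_{k+1}$; hence $\#\{e\in\mathcal{E}_k:P_2\notin\T_{k+1}\}\le 2\,\#\mathcal{E}_{k+1}$. Combining the two cases and noting $2^{2k}\cdot 2\,\#\mathcal{E}_{k+1}=\tfrac12\cdot 2^{2(k+1)}\,\#\mathcal{E}_{k+1}$ yields \eqref{eq:Ekrec} with $C=4$.

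To finish, set $\beta_k:=2^{2k}\,\#\mathcal{E}_k$, which vanishes unless $k_1\le k\le k_0$, and note $\beta_{k_0}=2^{2k_0}\cdot 4=4|R_{\T}|$ since the root has four boundary edges. Summing \eqref{eq:Ekrec} over $k_1\le k<k_0$ and using that the $T_k$ are nested in $R_{\T}$, so $\sum_k|T_{k+1}\setminus T_k|=|T_{k_0}|-|T_{k_1}|\le|R_{\T}|$, we obtain
\begin{align*}
\sum_{k}\beta_k\ =\ \beta_{k_0}+\sum_{k<k_0}\beta_k\ \le\ \beta_{k_0}+C\,|R_{\T}|+\tfrac12\sum_{k}\beta_k ,
\end{align*}
so that $\sum_k\beta_k\le 2(\beta_{k_0}+C|R_{\T}|)\lesssim|R_{\T}|$, and hence $\sum_{k}2^{2k}\,\#\Delta(\T_k)\le 2\sum_k\beta_k\lesssim|R_{\T}|$, as claimed.

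The step I expect to be the main obstacle is the second case of \eqref{eq:Ekrec}: showing that a boundary edge of $T_k$ whose two adjacent parent squares do not both survive into generation $k+1$ must in fact lie on $\partial T_{k+1}$, and extracting from this the contraction factor $\tfrac12$ — rather than merely $1$ — which is exactly what makes the scale induction close. The configurations that have to be checked honestly are pinch points of $T_k$, squares that meet along only part of an edge, and the a priori possibility that a scale‑$2^k$ edge straddles two collinear boundary edges of $T_{k+1}$ (ruled out because its interior contains no $2^{k+1}$‑lattice point); none of these causes real trouble, but they are where the bookkeeping must be done carefully.
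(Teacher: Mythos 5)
Your proof is correct, but it runs along a genuinely different line than the paper's. The paper argues by a direct packing: to each dyadic boundary point $(p,q)\in\Delta(\T_k)$ (normalized so that the adjacent square $[p-2^k,p]\times[q-2^k,q]$ is not in $\T_k$) it attaches the quarter square $S(p,q,k)=[p-2^k,p-2^{k-1}]\times[q-2^k,q-2^{k-1}]$ of area $2^{2(k-1)}$, shows that all these squares, across all points and all scales, are pairwise almost disjoint (a containment $S(p,q,k)\subseteq S(p',q',k')$ with $k<k'$ would place $(p,q)$ in $T_k\setminus T_{k'}$, contradicting the nesting $T_k\subseteq T_{k'}$ that convexity provides), and then sums their areas inside $3R_\T$ — one stroke, no recursion. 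You instead pass from boundary lattice points to boundary edges, and close a scale-by-scale recursion $2^{2k}\#\mathcal{E}_k\le 4|T_{k+1}\setminus T_k|+\tfrac12\,2^{2(k+1)}\#\mathcal{E}_{k+1}$, using convexity in the form that the parent of any non-root tree square is again in the tree (so $P_1\in\T_{k+1}$), charging case A to the area increment (which telescopes to $|R_\T|$ by the same nesting) and case B, via the half-edge observation, to the next coarser generation with the contraction factor $\tfrac12$ that makes the induction sum. The delicate points you flagged — that a fine boundary edge with $P_2\notin\T_{k+1}$ lies on $\partial T_{k+1}$ and is exactly one half of a unique coarse boundary edge, and the cyclic-adjacency argument identifying $\partial T_{k+1}$ with the union of its boundary edges — are handled correctly, and the finiteness of $\T$ legitimizes absorbing $\tfrac12\sum_k\beta_k$. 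What each approach buys: the paper's argument is shorter and requires no bookkeeping beyond the single disjointness claim, at the price of working in $3R_\T$ and a symmetry reduction over the four corner configurations; yours stays entirely inside $R_\T$, produces explicit constants ($\sum_k 2^{2k}\#\Delta(\T_k)\le 32|R_\T|$), and is closer in spirit to an induction-on-scales argument, at the price of the edge/point conversion and the case analysis.
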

\begin{proof}
It suffices to prove the claim for all dyadic points $(p,q) \in \partial T_k$ such that $[p-2^k,p]\times [q-2^k,q] \not\in \T_k$. 
 For each such point consider  the dyadic square 
\begin{align*}
S(p,q,k):=  [p-2^k, p-2^{k-1}] \times  [q-2^k, q-2^{k-1}]
\end{align*}
  which has   area $2^{2(k-1)}$. We claim that   squares of this form are pairwise almost disjoint. This will prove the lemma, as they are contained in $3R_{\T}$.

To see the claim, suppose that  $S({p,q,k})$ and $S(p',q',k')$ intersect in a set of positive measure. If $k=k'$, then they must coincide since they are dyadic and of the same scale.
So suppose that $k<k'$,   hence  $S(p,q,k)$ is contained in $S(p',q',k')$. Then the point $(p,q)$ is contained in the  interior of  $[p'-2^{k'},p']\times [q'-2^{k'}, q']$, which is disjoint from $T_{k'}$.  This shows that $(p,q)\in T_k$  but $(p,q)\notin T_{k'}$,
contradicting   convexity of $\T$.
\end{proof}
 
With any  collection of dyadic squares $\C\subseteq \D$ we associate a region in the upper half space $\R^3_+$. The region consists of Whitney  boxes associated with $S\in \C$ and is defined by
\begin{align*}
\Omega_{\C}:=\bigcup_{S\in \C}  {S} \times \Big [\frac {\ell(S)}{2},\ell(S) \Big ].
\end{align*}
The case $\C=\T$ for a   convex tree $\T$ is depicted in Figure \ref{fig:tree}. 
 Observe that $\Omega_{\T}=\cup_{k\in \Z}\Omega_{\T_k} =\cup_{k\in \Z}T_k\times [2^{k-1}, 2^k].$
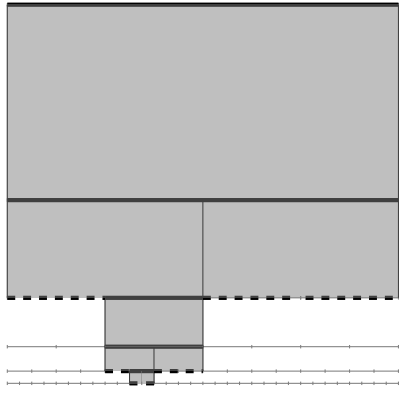
\begin{figure}[htb] 
\begin{tikzpicture}[scale=0.65]
\draw (0,8)-- (8,8);
\draw[gray] (4,2)-- (8,2);
\draw[gray] (0,1)-- (2,1);
\draw[gray] (4,1)-- (8,1);
\draw[gray] (0,0.5)-- (2,0.5);
\draw[gray] (3,0.5)-- (8,0.5);
\draw[gray] (0,0.25)-- (2.5,0.25);
\draw[gray] (3,0.25)-- (8,0.25);
\draw[gray] (0cm, 8cm-1pt) -- (0cm, 8cm+1pt);
\draw[gray] (8cm, 8cm-1pt) -- (8cm, 8cm+1pt);
\draw[gray] (0cm, 4cm-1pt) -- (0cm, 4cm+1pt);
\draw[gray] (4cm, 4cm-1pt) -- (4cm, 4cm+1pt);
\draw[gray] (8cm, 4cm-1pt) -- (8cm, 4cm+1pt);
\foreach \a in {0,2,4,6,8}
\draw[gray] (\a cm, 2cm-1pt) -- (\a cm, 2cm+1pt);
\foreach \x in {0,1,2,3,4,5,6,7,8}
\draw[gray] (\x cm, 1cm-1pt) -- (\x cm, 1cm+1pt);
\foreach \y in {0,0.5,1,1.5,2,2.5,3,3.5,4,4.5,5,5.5,6,6.5,7,7.5,8}
\draw[gray] (\y cm, 0.5cm-1pt) -- (\y cm,0.5cm+1pt);
\foreach \z in {0,0.5,1,1.5,2,2.5,3,3.5,4,4.5,5,5.5,6,6.5,7,7.5,8}
\draw[gray] (\z cm, 0.25cm-1pt) -- (\z cm,0.25cm+1pt);
\foreach \w in {0.5,1,1.5,2,2.5,3,3.5,4,4.5,5,5.5,6,6.5,7,7.5,8}
\draw[gray] (\w cm-0.25cm, 0.25cm-1pt) -- (\w cm-0.25cm,0.25cm+1pt);

\draw[gray] (6cm, 2cm-1pt) -- (6cm, 2cm+1pt);
\draw[ultra thick,black] (0,8)-- (8,8);
\draw[ultra thick,black] (0,4)-- (8,4);
\draw[ultra thick,black] (2,2)-- (4,2);
\draw[ultra thick,black] (2,1)-- (3,1);
\draw[ultra thick,black] (2.5,0.5)-- (3,0.5);
\draw[ultra thick,black] (3,1)-- (4,1);
\draw[dashed, ultra thick,black] (4,2)-- (5.9,2);
\draw[dashed, ultra thick,black] (6.1,2)-- (8,2);
\draw[dashed, ultra thick,black] (0,2)-- (2,2);
\draw[dashed, ultra thick,black] (2,0.5)-- (2.5,0.5);
\draw[dashed, ultra thick,black] (3,0.5)-- (4,0.5);
\draw[dashed, ultra thick,black] (2.5,0.25)-- (2.75,0.25);
\draw[dashed, ultra thick,black] (3,0.25)-- (2.75,0.25);

\draw[black] (0,8)--(0,2);
\draw[black] (8,8)--(8,2);
\draw[black] (4,4)--(4,0.5);
\draw[black] (2,2)--(2,0.5);
\draw[black] (2.5,0.5)--(2.5,0.25);
\draw[black] (3,1)--(3,0.25);


\fill[gray, opacity = 0.5](0,8) -- (0,4)-- (8,4) -- (8,8) --cycle;
\fill[gray, opacity = 0.5](0,4)-- (8,4) -- (8,2) -- (0,2)--cycle;
\fill[gray, opacity = 0.5](2,2) -- (4,2) -- (4,1)--(2,1)--cycle;
\fill[gray, opacity = 0.5] (3,1)--(2,1)--(2,0.5)--(3,0.5)--cycle;
\fill[gray, opacity = 0.5] (3,1)--(4,1)--(4,0.5)--(3,0.5)--cycle;
\fill[gray, opacity = 0.5] (2.75,0.5)--(3,0.5)--(3,0.25)--(2.75,0.25) --cycle;
\fill[gray, opacity = 0.5] (2.75,0.5)--(2.5,0.5)--(2.5,0.25)--(2.75,0.25) --cycle;
\end{tikzpicture}
\caption{Projection of     $\Omega_\T$ on  $\R^2_+$. The bold lines represent $S\times \ell(S)$ for   $S\in \T$, while the dotted lines correspond to $S\in \mathcal{L}(\T)$.} \label{fig:tree}
\end{figure}

Throughout the text, all two-dimensional   functions     will be measurable, bounded, with finite measure support  and positive. 
Denote \begin{align*}
\theta(x,y):=(1+|(x,y)|^4)^{-1} .
\end{align*}
For   a function $F$ on $\R^2$ and $\C\subseteq \D$  we define
\begin{align*}
M(F,\C):=\sup_{(p,q,t)\in  {\Omega_\C}} (F^2*[\theta]_t(p,q))^{1/2}.
\end{align*} 
Denote also 
$$\vartheta(x):=(1+|x|)^{-4}.$$

 Now we consider   a continuous variant of the Gowers box inner product  used in \cite{kovac:tp}. The  following  estimate joins a 
  version of   the   box Cauchy-Schwarz inequality and   an estimate of the Gowers box norm by an $\L^2$-type average.  This is the reason for the restricted range of exponents in Theorem \ref{thm:main}. 

\begin{lemma}\label{lemma:a-size}
For $(p,q,t) \in  \Omega_\C$ we have
\begin{align}\label{gowers-bip}
\F * [\vartheta\otimes \vartheta\otimes\vartheta\otimes\vartheta]_t(p,q,p,q)   \leq \prod_{j=1}^4 M(F_j,\C).
\end{align}
\end{lemma}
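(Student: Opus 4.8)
The plan is to recognize the left-hand side as a continuous Gowers box inner product against one-dimensional bump measures, run the standard iterated Cauchy--Schwarz, and then finish with an elementary pointwise comparison of the two convolution kernels. This is exactly the two-ingredient structure announced before the lemma: a box Cauchy--Schwarz inequality, together with an $\L^2$-type bound for the resulting box norm.

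First I would unfold the convolution. Since $[\vartheta\otimes\vartheta\otimes\vartheta\otimes\vartheta]_t$ factors into four one-dimensional pieces and all $F_j$ are nonnegative, writing $d\mu(s):=t^{-1}\vartheta(t^{-1}(p-s))\,ds$ and $d\nu(s):=t^{-1}\vartheta(t^{-1}(q-s))\,ds$ one has
\begin{align*}
\F*[\vartheta\otimes\vartheta\otimes\vartheta\otimes\vartheta]_t(p,q,p,q)=\iiiint F_1(a,c)F_2(b,c)F_3(b,d)F_4(a,d)\,d\mu(a)\,d\mu(b)\,d\nu(c)\,d\nu(d),
\end{align*}
the continuous box inner product of $(F_1,F_2,F_3,F_4)$ in the variable pairs $(a,c),(b,c),(b,d),(a,d)$. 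Grouping the $a$- and $b$-integrals, iterated applications of the Cauchy--Schwarz inequality (first in $(c,d)$, then in the remaining pairs of variables) bound this by $\prod_{j=1}^4\|F_j\|_\square$, where
\begin{align*}
\|F\|_\square^4:=\iiiint F(a,c)\,F(a',c)\,F(a,c')\,F(a',c')\,d\mu(a)\,d\mu(a')\,d\nu(c)\,d\nu(c'),
\end{align*}
with no loss of constants, since Cauchy--Schwarz is sharp and the two copies of the $a$-variable (resp. of the $c$-variable) carry the same measure.

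Next I would estimate the box norm by the $\L^2$ average at $(p,q,t)$. By AM--GM, $F(a,c)F(a',c)F(a,c')F(a',c')\le\tfrac12\big(F(a,c)^2F(a',c')^2+F(a',c)^2F(a,c')^2\big)$; integrating and using the symmetry of the product measure under $a\leftrightarrow a'$ gives $\|F\|_\square^4\le\big(\iint F(a,c)^2\,d\mu(a)\,d\nu(c)\big)^2$, that is, $\|F\|_\square^2\le F^2*[\vartheta\otimes\vartheta]_t(p,q)$. The last step is the pointwise inequality $\vartheta(x)\vartheta(y)\le\theta(x,y)$ on $\R^2$: putting $r:=|(x,y)|$, one has $(1+|x|)(1+|y|)\ge 1+|x|+|y|\ge 1+r$, hence $(1+|x|)^4(1+|y|)^4\ge(1+r)^4\ge 1+r^4$, which is the claim. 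Consequently $F^2*[\vartheta\otimes\vartheta]_t(p,q)\le F^2*[\theta]_t(p,q)$, and since $(p,q,t)\in\Omega_\C$ the right-hand side is at most $M(F,\C)^2$. Chaining these bounds proves \eqref{gowers-bip}.

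I do not expect a genuine difficulty here. The only things to watch are the bookkeeping of constants --- nothing is lost in either the box Cauchy--Schwarz or the AM--GM step, because the relevant measures on the two copies of each variable coincide --- and the elementary kernel estimate $\vartheta\otimes\vartheta\le\theta$, which is precisely what forces the fourth-power normalizations chosen for $\vartheta$ and $\theta$.
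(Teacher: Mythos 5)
Your proof is correct and follows essentially the same route as the paper: iterated (box/Gowers-type) Cauchy--Schwarz with respect to the measures $[\vartheta]_t(p-\cdot)\,dx$ and $[\vartheta]_t(q-\cdot)\,dy$, followed by the pointwise domination $\vartheta\otimes\vartheta\leq\theta$ and the definition of $M(F_j,\C)$. The only cosmetic differences are the order in which you split the four functions and your use of AM--GM in place of the paper's final Cauchy--Schwarz in $dy\,dy'$, which are equivalent steps.
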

\begin{proof} 
Denote the left-hand side of \eqref{gowers-bip} by  $A^{(p,q,t)}(F_1,F_2,F_3,F_4)$  and rewrite it as
\begin{align}\label{gowers:cs}
\int_{\R^2}    \Big( \int_\R F_1(x,y)F_2(x',y)  [\vartheta]_t(q-y) dy \Big ) 
\Big( \int_\R   F_3(x',y')F_4(x,y') [\vartheta]_t(q-y') dy' & \Big )   \nonumber \\
 [\vartheta]_t(p-x)[\vartheta]_t(p-x') dxdx' &  
\end{align}
Now we  apply the   Cauchy-Schwarz inequality with respect to  $[\vartheta]_t(p-x)dx,[\vartheta]_t(p-x')dx'$, which bounds this term   by
\begin{align*}
A^{(p,q,t)}(F_1,F_2,F_2,F_1)^{1/2}A^{(p,q,t)}(F_4,F_3,F_3,F_4)^{1/2}.
\end{align*}
By symmetry in $(p,q)$ it   follows   that
\begin{align*}
A^{(p,q,t)}(F_1,F_2,F_2,F_1) \leq A^{(p,q,t)}(F_1,F_1,F_1,F_1)^{1/2}A^{(p,q,t)}(F_2,F_2,F_2,F_2)^{1/2}.
\end{align*}

Now we write $A^{(p,q,t)}(F_j,F_j,F_j,F_j)$ in the same way as in \eqref{gowers:cs} and apply   the   Cauchy-Schwarz inequality with respect to $dy,dy'$. This yields     
\begin{align*}
A^{(p,q,t)}(F_j,F_j,F_j,F_j)\leq (F_j^2*[\vartheta\otimes\vartheta]_t(p,q))^2 
\leq (F_j^2*[\theta]_t(p,q))^2,
\end{align*}
which proves the claim.
\end{proof}

 With functions $\phi_j\in \L^1(\R)$, $j=1,2,3,4,$ and $\C\subseteq \D$ we associate the  local form    
\begin{align*}
\Theta^{\C}_{\phi_1,\phi_2,\phi_3,\phi_4}(F_1,F_2,F_3,F_4) & :=
 \int_{\Omega_\C}   \F  * [\phi_1\otimes \phi_2\otimes\phi_3\otimes\phi_4]_t(p,q,p,q) dpdq \frac{dt}{t}. 
\end{align*}
To shorten the notation we write  $\Theta^\C_{\phi_1,\phi_3} :=\Theta^\C_{\phi_1,\phi_3,\phi_1,\phi_3}$.

The following two  complementary lemmas will be used to control error and boundary terms in Proposition \ref{prop:tel}.
\begin{lemma}\label{lemma:error-Tc} 
For a convex tree $\T$  we have
\begin{align}\label{error}
  \sum_{k\in \Z} \Theta^{\T_k}_{\vartheta^2,\vartheta^2}(F_1\1_{T_k^c},F_2,F_3,F_4)   \lesssim |R_\T| \prod_{j=1}^4M(F_j,\T).
 \end{align}
\end{lemma}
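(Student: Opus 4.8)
The plan is to show that the left-hand side of \eqref{error} is controlled by a single ``global'' quantity attached to the leaves of the tree, after using the box estimate from Lemma \ref{lemma:a-size} to replace all four functions by their sizes. First I would discard the entanglement: since $\F=F_1(x,y)F_2(x',y)F_3(x',y')F_4(x,y')$ is nonnegative and each $\phi_j=\vartheta^2\leq \vartheta$ is nonnegative, for every $(p,q,t)\in\Omega_{\T_k}$ we have
\begin{align*}
\F*[\vartheta^2\otimes\vartheta^2\otimes\vartheta^2\otimes\vartheta^2]_t(p,q,p,q)\leq (F_1\1_{T_k^c})^2*[\theta]_t(p,q)^{1/2}\,\prod_{j=2}^4 M(F_j,\T),
\end{align*}
by running the Cauchy--Schwarz argument of Lemma \ref{lemma:a-size} but stopping on the first factor, so that $F_1$ keeps its full square average while the other three are bounded by $M(F_j,\T)$. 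Thus
\begin{align*}
\Theta^{\T_k}_{\vartheta^2,\vartheta^2}(F_1\1_{T_k^c},F_2,F_3,F_4)\lesssim \Big(\prod_{j=2}^4 M(F_j,\T)\Big)\int_{T_k}\int_{2^{k-1}}^{2^k}\big((F_1\1_{T_k^c})^2*[\theta]_t(p,q)\big)^{1/2}\,\frac{dt}{t}\,dp\,dq.
\end{align*}
(One should be slightly careful that Lemma \ref{lemma:a-size} as stated bounds the whole product; here I need the intermediate inequality with one factor left as a square average, which is exactly what the proof of that lemma produces after two applications of Cauchy--Schwarz.)

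Next, the key point: on the Whitney region $\Omega_{\T_k}$ we have $t\sim 2^k$, and $(p,q)\in T_k$ while the function has been cut to $T_k^c$. So $(F_1\1_{T_k^c})^2*[\theta]_t(p,q)$ only sees mass of $F_1$ at distance $\gtrsim \mathrm{dist}((p,q),\partial T_k)$, and since $\theta(x)\lesssim (1+|x|)^{-4}$ this average decays like $(t/d)^{4}\,t^{-2}$ when $d=\mathrm{dist}((p,q),\partial T_k)\gg t$; more precisely $\big((F_1\1_{T_k^c})^2*[\theta]_t(p,q)\big)^{1/2}\lesssim M(F_1,\T)\,(1+d/2^k)^{-2}$. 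Then I would bound the $(p,q)$-integral over $T_k$ against $2^{2k}$ times the number of boundary lattice points: partitioning $T_k$ by which boundary point of $\Delta(\T_k)$ is closest, each such point contributes $\sum_{n\ge 0}(\text{number of squares of sidelength }2^k\text{ at distance }\sim n2^k)\,(1+n)^{-2}$. The summability question here is the one genuine subtlety — a naive count of squares at distance $\sim n2^k$ can be of size $n$, and $\sum n(1+n)^{-2}$ diverges; I would instead note that the region $T_k$ is a union of dyadic squares and one can charge each unit square not merely to the nearest boundary point but keep the geometric decay honest by summing $(1+d/2^k)^{-2}$ over all unit cells of $T_k$ and dominating it by $C\,2^{-2k}|R_\T|$ — this uses that $T_k\subseteq R_\T$ so $|T_k|\le |R_\T|$, together with the decay, and in fact the crude bound $\int_{T_k}(1+\mathrm{dist}((p,q),\partial T_k)/2^k)^{-2}dp\,dq \lesssim 2^{2k}\#\Delta(\T_k)$ holds because for a fixed value $\sim n2^k$ of the distance the relevant cells lie within $O(n2^k)$ of $\partial T_k$ and can be injected into pairs (boundary point, offset $\le n$) with the offset contributing a convergent sum once one observes, via convexity of $\T$, that the connected pieces of $T_k$ near each boundary point are controlled. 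I expect this geometric/counting step, reconciling the weight decay with Lemma \ref{lemma:bdry}, to be the main obstacle.

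Granting that counting estimate, I get
\begin{align*}
\Theta^{\T_k}_{\vartheta^2,\vartheta^2}(F_1\1_{T_k^c},F_2,F_3,F_4)\lesssim \Big(\prod_{j=1}^4 M(F_j,\T)\Big)\,2^{2k}\,\#\Delta(\T_k),
\end{align*}
and summing over $k\in\Z$ and invoking Lemma \ref{lemma:bdry},
\begin{align*}
\sum_{k\in\Z}\Theta^{\T_k}_{\vartheta^2,\vartheta^2}(F_1\1_{T_k^c},F_2,F_3,F_4)\lesssim \Big(\prod_{j=1}^4 M(F_j,\T)\Big)\sum_{k\in\Z}2^{2k}\#\Delta(\T_k)\lesssim |R_\T|\prod_{j=1}^4 M(F_j,\T),
\end{align*}
which is \eqref{error}. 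The remaining checks are routine: the $dt/t$ integral over $[2^{k-1},2^k]$ is a bounded factor absorbed into the implied constant, and $\vartheta^2\le \vartheta\le\theta^{1/2}$-type pointwise comparisons are used only to stay within the hypotheses of Lemma \ref{lemma:a-size}.
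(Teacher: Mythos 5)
There is a genuine gap, and it sits exactly where you put the weight of the argument. Your first reduction (running the Cauchy--Schwarz chain of Lemma \ref{lemma:a-size} but keeping the first factor as $\big((F_1\1_{T_k^c})^2*[\theta]_t(p,q)\big)^{1/2}$) is fine, but the claimed decay $\big((F_1\1_{T_k^c})^2*[\theta]_t(p,q)\big)^{1/2}\lesssim M(F_1,\T)\,(1+d/2^k)^{-2}$, $d=\mathrm{dist}((p,q),\partial T_k)$, is false. Take $\T$ to be the complete tree of all dyadic squares between scales $2^k$ and $n2^k$ inside a root of sidelength $n2^k$ (so $T_k=R_\T$), let $(p,q)$ be the center of the root (so $d\sim n2^k$), and let $F_1=\1_E$ with $E\subseteq T_k^c$ the frame of width $2^k$ just outside $R_\T$. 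Then $M(F_1,\T)\sim 1$, while $(F_1\1_{T_k^c})^2*[\theta]_t(p,q)\sim 2^{-2k}\,n^{-4}\,|E|\sim n^{-3}$, so the left-hand side is $\sim n^{-3/2}\gg n^{-2}$. The defect is structural: once you apply Lemma \ref{lemma:a-size} first, the square root halves the quartic decay of $\theta$, and the mass of $F_1$ can be spread along the entire boundary (or, since the constant cannot depend on $\|F_1\|_\infty$, placed far outside $R_\T$ at high density), so no uniform exponent better than $1$ in $(1+d/2^k)^{-1}$ is available; with that exponent your $(p,q)$-integral picks up at least a logarithm of the aspect ratio at each scale, and in any case the counting inequality $\int_{T_k}(1+d/2^k)^{-2}dp\,dq\lesssim 2^{2k}\#\Delta(\T_k)$, which you yourself flag as the main obstacle, is never proved.

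The paper's proof avoids both issues by never converting the gain into a function of the distance to the boundary. One writes $\vartheta^2=\vartheta\cdot\vartheta$ and observes that for $(x,y)\in T_k^c$, $(p,q)\in T_k$ there is a point $(a,b)$ in the structured set $B(p,q)$ (boundary points sharing the coordinate $p$ or $q$, together with the corner points $\Delta(\T_k)$) with $|(p,q)-(a,b)|\le|(p,q)-(x,y)|$, so one copy of $\vartheta\otimes\vartheta(t^{-1}(p-x,q-y))$ is dominated by $\sum_{(a,b)\in B(p,q)}\theta(t^{-1}(p-a,q-b))$ --- a factor independent of the integration variables $(x,y,x',y')$. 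This factor is pulled out, Lemma \ref{lemma:a-size} is applied to the remaining convolution with the surviving copy of the kernel (using only $M(F_1\1_{T_k^c},\T_k)\le M(F_1,\T)$, with no decay demanded of it), and the extracted factor is then integrated over $(p,q)\in T_k$ exactly: aligned boundary points give one-dimensional integrals of size $t$ per boundary segment, corner points give $t^2$ each, for a total $\lesssim 2^{2k}\#\Delta(\T_k)$ with the full quartic decay and no borderline sum; Lemma \ref{lemma:bdry} then concludes. To salvage your outline you would have to keep a kernel factor aside before invoking Lemma \ref{lemma:a-size}, which is precisely the paper's argument.
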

Observe that by symmetry of  \eqref{error},   the same result holds under any permutation of the arguments  $F_1\1_{T_k^c},F_2,F_3,F_4$. 
\begin{proof}
For $k\in \Z$ and $t\in [2^{k-1},2^k]$ we consider    
\begin{align}\label{error-form-in-question} 
\int_{T_k} \int_{\R^4}  \F(F_1\1_{T_k^c},F_2,F_3,F_4)   (x,y,x',y')    [\vartheta  \otimes \vartheta \otimes \vartheta  \otimes \vartheta ]_t(p-x,q-y,p-x',q-y') & \nonumber \\
 \vartheta  \otimes \vartheta  (t^{-1}(p-x,q-y))  \vartheta  \otimes \vartheta   (t^{-1}(p-x',q-y'))  dxdydx'dy' dpdq. &  
\end{align} 
Note that \eqref{error} is obtained 
by integrating this term in $t\in [2^{k-1},2^k]$ and summing over $k\in \Z$.
We claim that for  $(x,y)\in T_k^c$ and $(p,q)\in  T_k$   
  there is a point $(a,b)$   contained in
\begin{align} \label{b-pq}
B(p,q):= \{(p',q') \in \partial T_k : p'=p\;\mathrm{or}\;q'=q \}   \cup \Delta(\T_k)
\end{align}
 such that
$|(p,q)-(x,y)|\geq   |(p,q)-(a,b)|.$ 

This can be seen as follows. By  $E$ we  denote  the intersection of $\partial T_k$ and the line segment between  $(p,q)$ and $(x,y)$. If $E$ contains dyadic points from $\Delta(\T_k)$, we may  set $(a,b)$ to be any of these points. 
Otherwise, $E$  must contain a point of the form $(p', q'+\alpha)$ or $(p'+\alpha, q')$ for some $p',q'\in 2^k\Z,\,\alpha\in (0,2^k)$. 
Assume it contains at least  one  of the form $(p', q'+\alpha)$. For definiteness    pick  the one  with the   the least distance to $(p,q)$. 
In case  $q'<q<q'+2^k$ we know that $(p',q)\in \partial T_k $ and we set $(a,b)=(p',q)$. 
If $q< q' $, we set $(a,b)=(p', q') \in \Delta(\T_k)$. In case $q> q'+2^k $  we choose $(p', q'+2^k) \in \Delta(\T_k)$.
Analogously we proceed in the remaining case, that is, if $E$  consists only of   points  $(p'+\alpha,q')$.

Since $\vartheta\otimes \vartheta \leq \theta$ and $\theta$ is radially decreasing, we   have for $(p,q),(x,y),(a,b)$ as above
$$\vartheta\otimes \vartheta(t^{-1}(p-x,q-y)) \leq \theta(t^{-1}(p-a,q-b)) \leq  \sum_{(a,b)\in B(p,q)} \theta(t^{-1}(p-a,q-b)).$$ 
Estimating   $\vartheta  \otimes \vartheta   (t^{-1}(p-x',q-y')) \leq 1$,   the term 
\eqref{error-form-in-question} is bounded by
\begin{align*} 
\nonumber
  \int_{T_k}    \F(F_1\1_{T_k^c},F_2,F_3,F_4) *   [\vartheta\otimes \vartheta\otimes \vartheta \otimes   \vartheta]_t(p,q,p,q)  
 \sum_{(a,b)\in B(p,q)} \theta (t^{-1}(p-a,q-b))  dpdq. 
\end{align*} 
Applying Lemma \ref{lemma:a-size},  the last display is no greater than
$$\Big( M(F_1\1_{T_k^c},\T_k)\prod_{j=2}^4 M(F_j,\T_k) \Big)    \int_{T_k}  \sum_{(a,b)\in B(p,q)} \theta (t^{-1}(p-a,q-b))  dpdq.  $$
Observe  that by homogeneity of the inequality \eqref{error} we may assume   $M(F_j,\T)=1$ for all $j$.  Due to this fact and  by 
    symmetry  in $p,q$,  it suffices  to   further estimate  
\begin{align*}
 \sum_{Q\in \mathcal{I}_k} \int_Q    \sum_{a:\{a\}\times Q \subseteq \partial T_k} \int_{\R } \theta(t^{-1}(p-a,0)) dpdq
  +  \sum_{(a,b)\in \Delta(\T_k)} \int_{\R^2}  \theta(t^{-1}(p-a,q-b)) dpdq.
\end{align*} 
Integrating the function $\theta$, the last display is       estimated  by  a constant times
$$\sum_{Q\in \mathcal{I}_k} \int_Q t\, \#\{a: \{a\}\times Q \subseteq \partial T_k\} +  t^2  \#\Delta(\T_k)
  \lesssim 2^{2k }  \#\Delta(\T_k).
 $$
Therefore,  up to a constant, \eqref{error}  is  bounded by 
 \begin{align*}
\sum_{k\in \Z} \int_{2^{k-1}}^{2^k}  2^{2k }  \#\Delta(\T_k) \frac{dt}{t}\lesssim  \sum_{k\in \Z}    2^{2k}  \#\Delta(\T_k)\lesssim |\mathcal{R}_\T|, 
 \end{align*}
which  is the desired result in view of the normalization $M(F_j,\T)=1$.
The last inequality follows from Lemma \ref{lemma:bdry}. 
\end{proof}
\begin{lemma}\label{lemma:bdry-term}
For a convex tree $\T$  we have
\begin{align}\label{bdry}
  \sum_{k\in \Z}     \Theta^{\T_k^c}_{\vartheta^2,\vartheta^2}(F_1\1_{T_k},F_2\1_{T_k},F_3\1_{T_k},F_4\1_{T_k})    \lesssim |R_\T| \prod_{j=1}^4M(F_j,\T).
 \end{align}
\end{lemma}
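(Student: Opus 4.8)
\textbf{Proof proposal for Lemma \ref{lemma:bdry-term}.}

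The plan is to mirror the structure of the proof of Lemma \ref{lemma:error-Tc}, but now the roles are reversed: the cutoffs $\1_{T_k}$ live on all four functions while the region of integration is the \emph{complement} $\Omega_{\T_k^c}$, i.e. we integrate over $(p,q)\in T_k^c$ and $t\in[2^{k-1},2^k]$. As before, I would first unfold the convolution against $[\vartheta^2\otimes\vartheta^2\otimes\vartheta^2\otimes\vartheta^2]_t$ at the diagonal point $(p,q,p,q)$, writing $\vartheta^2 = \vartheta\cdot\vartheta$ so that one copy of $\vartheta\otimes\vartheta$ plays the role of the ``kernel'' entering Lemma \ref{lemma:a-size} and the other copy of $\vartheta\otimes\vartheta\leq\theta$ supplies the decaying factor $\theta(t^{-1}(p-x,q-y))\theta(t^{-1}(p-x',q-y'))$. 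The key geometric observation, dual to the one in Lemma \ref{lemma:error-Tc}, is this: when $(p,q)\in T_k^c$ but $(x,y)\in T_k$ (forced by $F_1\1_{T_k}$), the segment joining $(p,q)$ to $(x,y)$ must cross $\partial T_k$, so there is a point $(a,b)\in B(p,q)$ — defined exactly as in \eqref{b-pq} but relative to $T_k^c$, namely the axis-parallel boundary points through $(p,q)$ together with $\Delta(\T_k)$ — with $|(p,q)-(x,y)|\geq|(p,q)-(a,b)|$. Hence $\theta(t^{-1}(p-x,q-y))\leq\sum_{(a,b)\in B(p,q)}\theta(t^{-1}(p-a,q-b))$, and estimating the other $\theta$ factor by $1$ reduces the summand to
\[
\int_{T_k^c}\F(F_1\1_{T_k},\dots,F_4\1_{T_k})*[\vartheta\otimes\vartheta\otimes\vartheta\otimes\vartheta]_t(p,q,p,q)\sum_{(a,b)\in B(p,q)}\theta(t^{-1}(p-a,q-b))\,dpdq.
\]

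Next I would apply Lemma \ref{lemma:a-size} with the collection $\C=\T_k^c$: since $(p,q,t)\in\Omega_{\T_k^c}$, the lemma gives the pointwise bound $\F*[\vartheta\otimes\vartheta\otimes\vartheta\otimes\vartheta]_t(p,q,p,q)\leq\prod_j M(F_j\1_{T_k},\T_k^c)$. The point now is that $M(F_j\1_{T_k},\T_k^c)\leq M(F_j,\T)$: on a Whitney box over a square $S\in\T_k^c$ the average $((F_j\1_{T_k})^2*[\theta]_t)^{1/2}$ is controlled by $M(F_j,\T)$ because — exactly as in the reduction of Theorem \ref{thm:GRTmain} to these lemmas — the truncated mass $F_j\1_{T_k}$ sitting at distance $\gtrsim t$ from $S$ is already accounted for by the $\theta$-tails, while the near part is dominated by averages over boxes in $\Omega_\T$; if necessary one absorbs this into the statement by noting that $\Omega_\T$ and the boxes over $T_k$ at scale $\sim t$ are comparable. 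After this reduction, by homogeneity normalize $M(F_j,\T)=1$ for all $j$, so we are left to bound
\[
\sum_{k\in\Z}\int_{2^{k-1}}^{2^k}\int_{T_k^c}\sum_{(a,b)\in B(p,q)}\theta(t^{-1}(p-a,q-b))\,dpdq\,\frac{dt}{t}.
\]

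The remaining estimate is a pure counting argument identical to the tail of Lemma \ref{lemma:error-Tc}: splitting $B(p,q)$ into its two parts and using symmetry in $p,q$, the axis-parallel contribution is $\sum_{Q\in\mathcal{I}_k}\int_Q t\,\#\{a:\{a\}\times Q\subseteq\partial T_k\}\,dq$ and the dyadic-point contribution is $t^2\,\#\Delta(\T_k)$; integrating $\theta$ in one or two variables gives the bound $\lesssim 2^{2k}\#\Delta(\T_k)$ for the inner integral. Integrating $dt/t$ over $[2^{k-1},2^k]$ costs only a constant, and then Lemma \ref{lemma:bdry} yields $\sum_k 2^{2k}\#\Delta(\T_k)\lesssim|R_\T|$, which together with the normalization is exactly \eqref{bdry}. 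The main obstacle I anticipate is the second step — justifying cleanly that $M(F_j\1_{T_k},\T_k^c)\lesssim M(F_j,\T)$, i.e. that restricting the functions to $T_k$ and then averaging over Whitney boxes attached to squares \emph{outside} $\T_k$ does not exceed the global size $M(F_j,\T)$; this is where the convexity of $\T$ (so that $T_k\subseteq T_{k'}$ for $k\le k'$) and the nesting of the Whitney regions must be used carefully, in the same spirit as the complementary geometric argument in Lemma \ref{lemma:error-Tc}. Everything else is a routine transcription of the previous proof with $T_k$ and $T_k^c$ interchanged.
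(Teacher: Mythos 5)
Your outline reproduces the first and last parts of the paper's argument correctly: the same geometric observation (the segment from $(p,q)\in T_k^c$ to $(x,y)\in T_k$ meets $\partial T_k$, giving a point of $B(p,q)$ closer to $(p,q)$), the same application of Lemma \ref{lemma:a-size} with $\C=\T_k^c$, and the same counting of boundary contributions via $2^{2k}\#\Delta(\T_k)$ and Lemma \ref{lemma:bdry}. Indeed the paper itself dispatches these steps by saying they are identical to Lemma \ref{lemma:error-Tc}. But the one genuinely new ingredient of this lemma is precisely the step you leave unproved: the bound $M(F_j\1_{T_k},\T_k^c)\lesssim M(F_j,\T)$. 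Unlike in Lemma \ref{lemma:error-Tc}, where the relevant sizes were taken over $\Omega_{\T_k}\subseteq\Omega_\T$ and were trivially dominated, here the supremum runs over Whitney boxes attached to squares \emph{outside} the tree, so $\Omega_{\T_k^c}\not\subseteq\Omega_\T$ and no monotonicity argument applies. Your justification ("the far mass is accounted for by the $\theta$-tails, the near part is dominated by averages over boxes in $\Omega_\T$, if necessary one absorbs this into the statement") is not an argument: the tail decay by itself does not tell you at which admissible centers to re-measure the mass, and the delicate case is exactly when $T_k$ surrounds or lies in several directions from the external point $(p,q)$, so that no single nearby box in $\Omega_\T$ sees all of $F_j^2\1_{T_k}$ with comparable weight. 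You flag this yourself as the main obstacle, which confirms the gap.

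The paper closes this gap with a concrete covering argument that you would need to supply. Normalize $(p,q)=0$, split $T_k$ into its intersections with the four quadrants, and treat each piece separately. For one piece, let $(a,b)\in\partial T_k$ realize $r=\min_{(a',b')\in\partial T_k}|(a',b')|$ and let $C$ be the closed cone of aperture $\pi/2$ with vertex $0$ and axis through $(a,b)$; since every $(x,y)\in T_k$ has $|(x,y)|\geq r$, the law of cosines gives $|(x,y)-(a,b)|\leq|(x,y)|$ for $(x,y)\in T_k\cap C$, hence $\theta(x,y)\leq\theta(x-a,y-b)$ there. One more iteration on $T_k\setminus C$ produces a second boundary point $(a',b')$ and cone $C'$ with $C\cup C'$ covering the piece of $T_k$, so that
\begin{align*}
\theta(x,y)\leq\theta(x-a,y-b)+\theta(x-a',y-b')\qquad\text{for all }(x,y)\in T_k,
\end{align*}
and therefore $(F_j^2\1_{T_k}*[\theta]_t(0,0))^{1/2}$ is dominated by a bounded number of averages centered at points of $\overline{T_k}$, each of which is at most $M(F_j,\T)$. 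Without this (or an equivalent) argument the proof is incomplete; with it, the rest of your write-up matches the paper.
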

\begin{proof}
Proceeding in the exact same way as in the proof of Lemma  \ref{lemma:error-Tc} we see that the left-hand side of \eqref{bdry} is bounded by 
\begin{align*}
& \sum_{k\in \Z}     \Big( \prod_{j=1}^4 M(F_j\1_{T_k},\T^c_k)   \Big )   \int_{T_k^c} \sum_{(a,b)\in B(p,q)} \theta (t^{-1}(p-a,q-b))  dpdq\\
   \lesssim &  \sum_{k\in \Z}    \Big(  \prod_{j=1}^4 M(F_j\1_{T_k},\T^c_k)\Big )      \, 2^{2k} \#\Delta(\T_k)   ,
\end{align*}
where $B(p,q)$ is defined   as in \eqref{b-pq}.
We claim that  for each $j$ we have   
\begin{align*}
M(F_j\1_{T_k}, \T_k^c ) \lesssim M(F_j,\T).
\end{align*} 
Together with an application of Lemma \ref{lemma:bdry} this will finish the proof. 

The  claim can be rephrased as follows: for each $(p,q) \in T_k^c$ we have 
$$(F_j^2\1_{T_k}*[\theta]_t(p,q))^{1/2} \lesssim M(F_j,\T).$$
First we set
  $(p,q)=0$ without loss of generality. Also, we may assume that $T_k$ is contained in the   quadrant
$\{(p,q): p \geq 0,\, q\geq 0\}$,
as otherwise we   restrict $T_k$   to   each of the four quadrants  and  all   parts are   treated in the same way.   Denote 
$$r:=\min_{(a,b) \in \partial T_k} |(a,b)|.$$ 
Take any point $(a,b)$ which minimizes the distance and consider
 the closed cone $C$  in $\R^2$ with vertex $0$ and aperture $\pi/2$, its axis  being  the line   spanned by $(a,b)$. 
Observe that   each   $(x,y)\in T_k\cap C$ satisfies
 $|(x,y)| \geq |(x,y)-(a,b)|$ and thus $\theta(x,y) \leq \theta (x-a,y-b)$.  
 If $T_k\setminus C\neq \emptyset$, then we iterate 
 with $T_k$ replaced by ${T_k\setminus C}$. We find a point  $(a',b')\in  \partial T_k \cap {\partial(T_k\setminus C)}$ and a   cone $C'$   such that for 
each $(x,y)\in ({T_k\setminus C})\cap C'$ we have  $|(x,y)| \geq |(x,y)-(a',b')|$ and so $\theta(x,y) \leq \theta (x-a',y-b')$. 
Since $C\cup C'$   covers  $T_k$,   for each $(x,y)\in T_k$ we have
 $$ \theta(x,y) \leq \theta(a-x,b-y) + \theta( a'-x,b'-y).$$ 
Therefore, 
 \begin{align*}
 (F_j^2\1_{T_k}*[\theta]_t(0))^{1/2} \lesssim  (\sup_{(a,b,t)\in  {\Omega_{\T_k}}} F_j^2\1_{T_k}*[\theta]_t(a,b))^{1/2} \leq  \sup_{(a,b,t)\in  {\Omega_{\T}}} (F_j^2*[\theta]_t(a,b))^{1/2}
 \end{align*}
 as desired. 
\end{proof}
 For a function $f \in \S(\R)$ 
we consider the    Schwartz seminorm 
 $$\|f\|:=\sup_{x\in \R} \,(1+|x|)^8|f(x)|   +(1+|x|)^9 |f'(x)|.$$ 
 Now we are ready to state the estimate which will take the place of   the telescoping identities used in \cite{kovac:tp}, \cite{pd:L4}. 
\begin{proposition}\label{prop:tel}
Let $(\rho_i,\sigma_i)$ be two pairs of real-valued Schwartz functions which satisfy
\begin{align}\label{eqn:FT}
-t\partial_t|\widehat{\rho_i}(t\tau)|^2=|\widehat{\sigma_i}(t\tau)|^2.
\end{align}
Then  we have for any  convex tree $\T$
\begin{align}\label{identity-telescoping}
\Theta_{\rho_1,\sigma_2}^{\T}(F_1,F_2,F_3,F_4) +  \Theta_{\sigma_1,\rho_2}^{\T}(F_1,F_2,F_3,F_4) 
\lesssim_c |R_{\T}|\prod_{j=1}^4M(F_j,\T),
\end{align}
where   $c= \|\rho_1\|^2 \|\sigma_2\|^2 + \|\sigma_1\|^2\|\rho_2\|^2 + \|\rho_1\|^2 \|\rho_2\|^2$.
\end{proposition}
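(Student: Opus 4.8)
The plan is to exploit the differential relation \eqref{eqn:FT} to telescope the two local forms against each other, reducing the left-hand side to a single ``endpoint'' form at the coarse scale plus boundary corrections that are exactly the objects controlled by Lemma \ref{lemma:error-Tc} and Lemma \ref{lemma:bdry-term}. Concretely, writing $\Omega_{\T}=\bigcup_{k\in\Z} T_k\times[2^{k-1},2^k]$, I would first pass to the frequency side on the $(x,y)$ and $(x',y')$ fibers and use that, after the substitution that turns the spatial convolution form \eqref{form:enough-to-consider} into the frequency integral, the integrand carrying $\rho_i,\sigma_i$ on the two diagonal pairs is, by \eqref{eqn:FT}, exactly $-t\partial_t$ of the product $|\widehat{\rho_1}(t\xi)|^2|\widehat{\rho_2}(t\eta)|^2$ evaluated on the entangled Fourier transform $\widehat{\F}(\xi,\eta,-\xi,-\eta)$. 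Thus on each dyadic scale band $t\in[2^{k-1},2^k]$ the sum $\Theta^{\T_k}_{\rho_1,\sigma_2}+\Theta^{\T_k}_{\sigma_1,\rho_2}$ integrates a total $t$-derivative, and summing the fundamental theorem of calculus over $k$ telescopes the bulk contribution down to a single scale (formally the $k\to+\infty$ term, which vanishes because the $F_j$ have finite measure support, minus a $k\to-\infty$ term, or in the localized setting an expression supported on $T_\infty$); the global version of this is precisely the computation done in \cite{pd:L4}.

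The difficulty, and the reason the proposition is an inequality rather than an identity, is that the telescoping above is clean only for the \emph{global} form, i.e.\ with $\R^2$ in place of $T_k$ in the spatial integral. To localize, I would follow the strategy announced in the introduction: on each scale $k$ replace $\F(F_1,F_2,F_3,F_4)$ restricted to the Whitney slab $T_k\times[2^{k-1},2^k]$ by the entangled product of the \emph{restricted} functions $F_j\1_{T_k}$ (or the appropriate projections), for which the spatial integral can be extended to all of $\R^2$ and the global telescoping applies. The difference between the true local form and this globalized restricted form consists of two kinds of error: (i) terms where at least one $F_j$ is replaced by $F_j\1_{T_k^c}$ while the integration is over $T_k$ — these are controlled, after dominating the product kernel $\varphi^{(u)}\otimes\psi^{(v)}\otimes\varphi^{(-u)}\otimes\psi^{(-v)}$ pointwise by $\vartheta^2$ in each variable (using the Schwartz decay encoded in $\|\cdot\|$, which is why the constant $c$ has that shape), by Lemma \ref{lemma:error-Tc}; and (ii) after telescoping, the endpoint term at the boundary between consecutive generations $T_k$ and $T_{k+1}$, i.e.\ the integral over $T_{k+1}\setminus T_k = T_k^c\cap T_{k+1}$ (equivalently over $\T_k^c$) of the globalized form with all four functions restricted to $T_k$ — this is exactly the object bounded by Lemma \ref{lemma:bdry-term}. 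Both lemmas produce the bound $|R_\T|\prod_j M(F_j,\T)$, which is the claimed right-hand side.

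More precisely, I would organize the argument as follows. First, fix the identification $\rho_1,\rho_2$ with the $\varphi,\psi$-type functions and $\sigma_1,\sigma_2$ with their ``integrated'' companions, and record that \eqref{eqn:FT} gives
$-t\partial_t\big(|\widehat{\rho_1}(t\xi)|^2|\widehat{\rho_2}(t\eta)|^2\big)=|\widehat{\sigma_1}(t\xi)|^2|\widehat{\rho_2}(t\eta)|^2+|\widehat{\rho_1}(t\xi)|^2|\widehat{\sigma_2}(t\eta)|^2$,
which on the spatial side is the statement that the $\Theta$-integrand of $\Theta_{\sigma_1,\rho_2}+\Theta_{\rho_1,\sigma_2}$ at scale $t$ equals $-t\partial_t$ of the $\Theta$-integrand of $\Theta_{\rho_1,\rho_2}$. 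Second, on each generation, telescope: for a convex tree the sets $T_k$ are nested, so summing $\int_{2^{k-1}}^{2^k}(-t\partial_t)(\cdots)\,\frac{dt}{t}=\int_{2^{k-1}}^{2^k}(-\partial_t)(\cdots)\,dt$ over $k$ collapses to boundary values in $t$; the surviving piece is, up to the two error types (i) and (ii) above needed to reconcile ``integrate over $T_k$'' with ``integrate over $\R^2$ then restrict,'' a single form of the type $\int_{\R^2}\F(F_1\1_{T_{k_0}},\dots)\ast[\rho_1\otimes\rho_2\otimes\rho_1\otimes\rho_2]_t(p,q,p,q)\,dp\,dq$ at the coarsest nonempty scale, which is $\le \prod_j M(F_j,\T)\,|R_\T|$ directly by Lemma \ref{lemma:a-size} after crudely integrating the $\vartheta^2$ kernels and using $|R_\T|$ as the measure of the region. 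Third, invoke Lemma \ref{lemma:error-Tc} for the $\1_{T_k^c}$-errors (in any of the four slots, by the symmetry remark) and Lemma \ref{lemma:bdry-term} for the inter-generational boundary terms, absorbing the kernel bound $\varphi^{(u)}\psi^{(v)}$-products $\lesssim \|\rho_1\|\|\rho_2\|\cdots\,\vartheta^2$ into the constant $c$. I expect the main obstacle to be purely bookkeeping: keeping track of exactly which restricted/globalized form appears at which scale so that every discrepancy is genuinely one of the two shapes handled by the two lemmas, and verifying that the pointwise kernel domination by $\vartheta^2$ together with the seminorm $\|\cdot\|$ gives precisely the stated $c$; the analytic content is entirely in the three lemmas already proved.
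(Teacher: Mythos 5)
Your overall architecture does match the paper's: combine \eqref{eqn:FT} for the two pairs via the product rule, run the fundamental theorem of calculus in $t$ on the kernel $\rho_1\otimes\rho_2\otimes\rho_1\otimes\rho_2$ over each band $[2^{k-1},2^k]$, restrict the $F_j$ to $T_k$ paying errors of the Lemma \ref{lemma:error-Tc} shape, and pay boundary terms of the shape $\Theta^{\T_k^c}_{\vartheta^2,\vartheta^2}((F_j\1_{T_k})_j)$, handled by Lemma \ref{lemma:bdry-term}, when the identity coming from \eqref{eqn:FT} (which is an identity for integrals over full lines in $p$, resp.\ $q$, not a pointwise one) is localized. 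The genuine gap is in your treatment of the telescoped terms. Since the spatial region $T_k$ changes with $k$, the sum over $k$ of $\int_{T_k}\F*[\Psi]_{2^{k-1}}-\int_{T_k}\F*[\Psi]_{2^{k}}$ does \emph{not} collapse to a single coarsest-scale term with the other end vanishing; finite measure support of the $F_j$ is irrelevant here, because in a tree the scales are capped above by $\ell(R_\T)$ and below by the leaves. What makes the sum collapse is convexity of $\T$ (every non-root square of $\T$ has all four children in $\T\cup\mathcal{L}(\T)$), which you never invoke, and what survives is the root term \emph{together with a sum over all leaves of $\T$}, occurring at many different scales. These leaf terms do not vanish; they are bounded by Lemma \ref{lemma:a-size} combined with the fact that the leaves of a convex tree partition $R_\T$, giving $\sum_{S\in\mathcal{L}(\T)}|S|+|R_\T|\lesssim|R_\T|$. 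Without this step your ``endpoint'' accounting is incomplete and the argument as stated fails at the fine-scale end.

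A secondary, fixable issue is your order of operations: you restrict the functions to $T_k$ \emph{before} telescoping across $k$. Then the contribution at scale $2^{k-1}$ carries $F_j\1_{T_k}$ while the matching contribution from generation $k-1$ carries $F_j\1_{T_{k-1}}$, and since $T_{k-1}\subseteq T_k$ with strict inclusion in general, these do not cancel; the discrepancies involve a function supported on $T_{k-1}^c$ integrated over squares of $\T_{k-1}$, so they are again of the Lemma \ref{lemma:error-Tc} type and can be absorbed, but they are not among the two error shapes you list. The paper sidesteps this entirely by telescoping first, with the original (unrestricted) $F_j$ over the true local regions $T_k$, and only afterwards splitting $F_j=F_j\1_{T_k}+F_j\1_{T_k^c}$ on the $-t\partial_t$ side; I would recommend adopting that order, after which your use of Lemmas \ref{lemma:error-Tc} and \ref{lemma:bdry-term} and the domination of the kernels by $\vartheta^2$ (which produces the stated constant $c$) goes through as you describe.
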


\begin{proof}
By homogeneity  of \eqref{identity-telescoping} we may assume $M(F_j,\T)=1$ for all $j$. By scaling invariance we may suppose   $|R_{\mathcal{\T}}|=1$. Thus, we are set to  establish
\begin{align} \label{eqn:tel-to-establish}
\Theta_{\rho_1,\sigma_2}^{\T}(F_1,F_2,F_3,F_4) +  \Theta_{\sigma_1,\rho_2}^{\T}(F_1,F_2,F_3,F_4) 
\lesssim_c 1.
\end{align}
Denote $\Psi:=\rho_{1}\otimes \rho_{2} \otimes \rho_{1} \otimes \rho_{2}$.
By the fundamental theorem of calculus  we have 
\begin{align}\label{eqn:ftc}
&  [\Psi]_{2^{k-1}} 
-   [\Psi]_{2^{k}} = 
\int_{2^{k-1}}^{2^k}   (-t\partial_t   
[\Psi]_t) \frac{dt}{t}.
\end{align}
We convolve the equality \eqref{eqn:ftc} with $\F $
and evaluate the convolution at $(p,q,p,q)$. Then we    integrate in $(p,q)$ over $T_k$ and sum over $k\in \Z$. Writing $T_k$ as the almost disjoint union of $S\in \T_k$, the   left-hand side of \eqref{eqn:ftc} becomes
\begin{align*}
L:= \sum_{k\in \Z}\sum_{S\in \T_k} \Big (  \sum_{S' \,\mathrm{child\,of } \,   S}\int_{S'} \F *[\Psi]_{\ell(S')}(p,q,p,q) dp dq 
-
  \int_{S} \F *[\Psi]_{\ell(S)}(p,q,p,q) dp dq  \Big )
\end{align*}
Since $\T$ is convex, each square $S\in \T\setminus \{R_\T\}$ has all four children $S'$ in $\T\cup \mathcal{L}(\T)$.  Thus, the  last display is a telescoping sum which equals  
\begin{align*}  
 \sum_{S\in \mathcal{L}(\T)}\int_{S} \F *[\Psi]_{\ell(S)}(p,q,p,q) dp dq 
-  
\int_{R_{\T}} \F *[\Psi]_{\ell(R_{\T})}(p,q,p,q) dp dq . 
\end{align*}
We bound    $|\Psi|\lesssim_c \vartheta^2\otimes \vartheta^2 \otimes \vartheta^2\otimes  \vartheta^2$  and apply Lemma \ref{lemma:a-size}. This yields
\begin{align*} 
|L|\lesssim_c \Big( \sum_{S\in \mathcal{L}(\T)}|S| + 1  \Big)    \lesssim 1.
\end{align*}
The last estimate follows  since  the leaves of $\T$ partition the root $R_\T$. 

Now we consider the right-hand side of \eqref{eqn:ftc}, which after convolving it with $\F$, integrating over $T_k$  and summing in $k\in \Z$ results in 
\begin{align*} 
R:= \sum_{k\in \Z} \int_{2^{k-1}}^{2^k}\int_{T_k}  \F((F_j)_{j\in J}) * (-t\partial_t   
[\Psi]_t)(p,q,p,q)  dp dq\frac{dt}{t},
\end{align*}
where $J:=\{1,2,3,4\}$.  
First we show that up to a controllable error, we may suppose that the functions $F_j$ are supported on $T_k$. 
For $j\in J$ we write $F_j= F_j\1_{T_k} + F_j\1_{T_k^c}$.  Then 
$$ R = M+E,$$ 
where the main  term is defined as 
\begin{align*}
M:=   \sum_{k\in \Z} \int_{2^{k-1}}^{2^k}\int_{T_k}  \F((F_j\1_{T_k})_{j\in J})* (-t\partial_t   
[\Psi]_t)(p,q,p,q)  dp dq\frac{dt}{t}
\end{align*}
and  the  error term  equals
\begin{align*}
E :=   \sum_{((X_{j,k})_{k\in \Z})_{j\in J}}    \sum_{k\in \Z}  \int_{2^{k-1}}^{2^k} \int_{T_k}  \F((F_j\1_{X_{j,k}})_{j\in J})* (-t\partial_t   
[\Psi]_t)(p,q,p,q)  dp dq\frac{dt}{t}, 
\end{align*}
where the outer summation is over $((X_{j,k})_{k\in \Z})_{j\in J}\in \{T,T^c\}^4\setminus \{(T,T,T,T)\}$ for $T:=(T_k)_{k\in\Z}$, $T^c:=(T_k^c)_{k\in\Z}$.

To treat   $E$ we expand
$-t\partial_t   
[\Psi]_t = -t\partial_t([\rho_1]_t \otimes [\rho_2]_t \otimes [\rho_1]_t \otimes [\rho_2]_t ) $ and use the chain rule, which results in  four terms. By symmetry we consider only $-t\partial_t([\rho_1]_t)\otimes [\rho_2]_t \otimes [\rho_1]_t\otimes [\rho_2]_t$, on which we use the identity 
\begin{align}\label{identity:chain}
 -t\partial_t [\rho_1]_t= -t\partial_t\Big (\frac{1}{t}\rho_1\Big (\frac{x}{t} \Big )\Big ) =  \frac{1}{t}\rho_1\Big (\frac{x}{t} \Big ) + \frac{1}{t} \frac{x}{t} \rho_1'\Big (\frac{x}{t} \Big ).
\end{align} and  bound   the right-hand side of \eqref{identity:chain} by $\lesssim_c [\vartheta^2]_t$.  
 This gives $|t\partial_t[\Psi]_t|\lesssim_c  [\vartheta^2\otimes \vartheta^2\otimes \vartheta^2\otimes \vartheta^2]_t $.  
 By Lemma \ref{lemma:error-Tc} we then    have  $|E| \lesssim_c 1$.

To estimate   $M$ we  expand the convolution  and interchange the order of integration such that the integration in $(p,q)$
 becomes the innermost. For now we consider only this innermost integral, which we write in the form  
 $$\int_{ T_k}-t\partial_t \Big( \Big  (  [\rho_{1}]_t(p-x)  [\rho_{1}]_t(p-x') \Big ) \Big ( [\rho_{2}]_t(q-y)     [ \rho_{2}]_t(q-y') \Big )\Big)  dpdq.$$
Deriving the product of $[\rho_{1}]_t(p-x)  [\rho_{1}]_t(p-x')$ and $[\rho_{2}]_t(q-y)     [ \rho_{2}]_t(q-y')$   yields two terms.
Using Fubini and  moving the derivative outside the integral we arrive at 
\begin{align}\label{term:leibnitz-1}
& \sum_{Q\in \mathcal{I}_k}\Big( -t\partial_t \int_{T_{Q,1}}  
[\rho_{1}]_t(p-x)[\rho_{1}]_t(p-x')   dp   \Big  )\int_{Q} [\rho_{2}]_t(q-y)[\rho_{2}]_t(q-y') dq  \\
\label{term:leibnitz-2}
+ &  \sum_{P\in \mathcal{I}_k}   \int_{P} 
[\rho_{1}]_t(p-x)[\rho_{1}]_t(p-x') dp \,\Big (-t\partial_t\int_{T_{P,2}} [\rho_{2}]_t(q-y)[\rho_{2}]_t(q-y')   dq \Big ),
\end{align}
where  for a dyadic interval $Q$ we denote $T_{Q,1} := \cup_{P:P\times Q \in \T} P$ and $T_{P,2}$ is defined analogously. 
As both parts are treated in the same way, 
we  further investigate only \eqref{term:leibnitz-1}. 

The  identity \eqref{eqn:FT} implies  
\begin{align*}
 -t \partial_t  \int_{\R}  
[\rho_{1}]_t(p-x)[\rho_{1}]_t(p-x') dp =  \int_{\R}  
[\sigma_{1}]_t(p-x)[\sigma_{1}]_t(p-x') dp,
\end{align*}
which can be seen by an application of the inverse Fourier transform on \eqref{eqn:FT}. 
Hence, 
\begin{align}\nonumber
& -  t\partial_t  \int_{T_{Q,1}}  
[\rho_{1}]_t(p-x)[\rho_{1}]_t(p-x')   dp =   \int_{T_{Q,1}}  
[\sigma_{1}]_t(p-x)[\sigma_{1}]_t(p-x') dp + b_1,
\end{align}
where  $b_1$ is the boundary portion
\begin{align*} 
 b_1:=  
   \int_{\R\setminus T_{Q,1}}  
[\sigma_{1}]_t(p-x)[\sigma_{1}]_t(p-x') dp 
+   t\partial_t  \int_{\R\setminus T_{Q,1}}  
[\rho_{1}]_t(p-x)[\rho_{1}]_t(p-x') dp.
\end{align*}
Therefore we have
\begin{align}   \label{eqn:sum-of-thetas}
M  =  \Big ( \sum_{k\in \Z} \Theta_{\sigma_1,\rho_2}^{\T_{k}}((F_j\1_{T_k})_{j\in J}) + \Theta_{\rho_1,\sigma_2}^{\T_{k}}((F_j\1_{T_k})_{j\in J}) \Big ) + B_1 + B_2,
\end{align} 
where the boundary term $B_1$ emerges from $b_1$ and equals
\begin{align*}
 B_1 := & \sum_{k\in \Z} \int_{2^{k-1}}^{2^k} \sum_{Q\in \mathcal{I}_k} \int_Q\int_{\R\setminus T_{Q,1}}  
 \F((F_j\1_{T_k})_{j\in J})(x,y,x',y')\\
 &    \Big( [\sigma_{1}]_t(p-x)  [\sigma_{1}]_t(p-x')) +   t\partial_t \big ([\rho_{1}]_t(p-x)  [\rho_{1}]_t(p-x') \Big)  [\rho_{2}]_t(q-x)  [\rho_{2}]_t(q-x')\\
&  \hspace{10.2cm}   dxdydx'dy'  dp dq\frac{dt}{t}. 
\end{align*}
The   boundary term $B_2$  arises from the treatment of  \eqref{term:leibnitz-2} and is analogous to $B_1$ with   $(\sigma_1,\rho_2)$ replaced by $(\rho_1,\sigma_2)$. For   $B_1,B_2$ we  derive by $t$ using \eqref{identity:chain} and dominate the resulting functions by $\lesssim_c \vartheta^2$. Note that
\begin{align*} 
 | B_1 + B_2|\lesssim_c \sum_{k\in \Z}\Theta^{\T_k^c}_{\vartheta^2,\vartheta^2}((F_j\1_{T_k})_{j\in J}) \lesssim 1,
\end{align*}
where the last inequality follows by Lemma \ref{lemma:bdry-term}.

Summarizing,  since  $L = R=M +   E$,   using \eqref{eqn:sum-of-thetas}   yields  the identity
\begin{align}\label{tel:almost-final}
L = \Big ( \sum_{k\in \Z} \Theta_{\sigma_1,\rho_2}^{\T_{k}}((F_j\1_{T_k})_{j\in J}) + \Theta_{\rho_1,\sigma_2}^{\T_{k}}((F_j\1_{T_k})_{j\in J}) \Big ) + B_1+B_2+E. 
\end{align}
Proposition \ref{prop:tel}  now follows by writing  
\begin{align*}
\Theta_{\rho_1,\sigma_2}^{\T}((F_j)_{j\in J})  +  \Theta_{\sigma_1,\rho_2}^{\T}((F_j)_{j\in J}) 
\end{align*}
in the form  
\begin{align*}
 & \sum_{k\in \Z} \Theta_{\sigma_1,\rho_2}^{\T_{k}}((F_j\1_{T_k})_{j\in J}) +   \Theta_{\rho_1,\sigma_2}^{\T_{k}}((F_j\1_{T_k})_{j\in J}) \\
  +    \sum_{((X_{j,k})_{k\in \Z})_{j\in J}}  &  \sum_{k\in \Z}   \Theta_{\rho_1,\sigma_2}^{\T_{k}}((F_j\1_{X_{j,k}})_{j\in J}) +      \Theta_{\sigma_1,\rho_2}^{\T_{k}}((F_j\1_{X_{j,k}})_{j\in J}), 
\end{align*}
where in the second line, the outer sum   runs over $((X_{j,k})_{k\in \Z})_{j\in J} \in \{T,T^c\}^4\setminus \{(T,T,T,T)\}$ for $T$ as above. 
Using   \eqref{tel:almost-final} together with  
$$|L -  B_1- B_2-E| \lesssim_c 1$$ and evoking Lemma  \ref{lemma:error-Tc} two more times finally   yields \eqref{eqn:tel-to-establish}.
\end{proof}

\section{Tree estimate}
In this section  we derive  an estimate for    a  quadrisublinear variant  of $\Lambda_{\varphi,\psi}^{N}$  restricted to  $\Omega_\T$ for a convex tree $\T$. This form is given by
\begin{align*}
\widetilde{\Theta}^\T_{\varphi,\psi}(F_1,F_2,F_3,F_4):= \int_{\Omega_\T} \big  |   \F  * [{\varphi^{(u)}}\otimes {\psi^{(v)}} \otimes {\varphi^{(-u)}} \otimes{\psi^{(-v)}}]_t(p,q,p,q) \big |dpdq\frac{dt}{t}.
\end{align*}
It can also be recognized as a quadrisublinear version of ${\Theta}^\T_{\varphi^{(u)},\psi^{(v)},\varphi^{(-u)},\psi^{(-v)}}$.
\begin{proposition}\label{prop:single-tree}
We have the estimate
\begin{align}\label{def:loc-vers-lambda}
\widetilde{\Theta}^\T_{\varphi,\psi}(F_1,F_2,F_3,F_4)\lesssim |R_{\mathcal{\T}}|\prod_{j=1}^4M(F_j,\T).
\end{align}
\end{proposition}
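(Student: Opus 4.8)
The plan is to deduce Proposition \ref{prop:single-tree} from the local telescoping estimate Proposition \ref{prop:tel} by choosing the Schwartz pairs $(\rho_i,\sigma_i)$ appropriately and running a Cauchy--Schwarz argument in the spirit of \cite{kovac:tp} and \cite{pd:L4}. First I would pass from the quadrisublinear form $\widetilde\Theta^\T_{\varphi,\psi}$ to a bilinear-type bound by applying the Cauchy--Schwarz inequality twice over the fibers, exactly as in the proof of Lemma \ref{lemma:a-size}: writing the integrand as an integral in the $y$-variables of a product of two factors and then splitting in the $x$-variables, one bounds $|\F*[\varphi^{(u)}\otimes\psi^{(v)}\otimes\varphi^{(-u)}\otimes\psi^{(-v)}]_t(p,q,p,q)|$ pointwise by the geometric mean of four ``diagonal'' expressions of the form $\G_j*[\Phi\otimes\Psi\otimes\Phi\otimes\Psi]_t(p,q,p,q)$ where $\G_j=\F(F_j,F_j,F_j,F_j)$ and $\Phi,\Psi$ are fixed Schwartz functions built from $\varphi,\psi$ (after absorbing the polynomial weights $(1+|u|)^{-25}$, $(1+|v|)^{-10}$, which only cost a uniformly bounded constant since the seminorms $\|\varphi^{(\pm u)}\|$, $\|\psi^{(\pm v)}\|$ are uniformly bounded). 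By the arithmetic--geometric mean inequality it then suffices to estimate $\Theta^\T_{\Phi,\Psi}(F_j,F_j,F_j,F_j)$ for each $j$ separately, i.e. a single diagonal form over $\Omega_\T$.

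Next I would relate this diagonal form to the telescoping estimate. The point of the hypothesis on $\psi$, namely that $\tau\mapsto(\int_\tau^\infty|\widehat\psi|^2\,d\sigma/\sigma)^{1/2}$ lies in $\S(\R)$, is precisely that one can find a real-valued Schwartz function $\rho$ with $|\widehat\rho(\tau)|^2=\int_\tau^\infty|\widehat\psi(\sigma)|^2\,d\sigma/\sigma$ (for $\tau>0$, extended evenly), so that $-t\partial_t|\widehat\rho(t\tau)|^2=|\widehat\psi(t\tau)|^2$. For the $\varphi$-variable one does not need a telescoping structure; instead one simply dominates $\varphi$-type convolution factors by $[\vartheta^2]_t$ as in \eqref{identity:chain}. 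Concretely I would set up the pair $(\rho_i,\sigma_i)$ in Proposition \ref{prop:tel} with $\sigma_i$ matching the $\psi$-factors and $\rho_i$ the associated primitive, run the telescoping over the convex tree $\T$, and observe that the left-hand side of \eqref{identity-telescoping} controls (after the pointwise domination in the $\varphi$-variable) the diagonal $\Theta^\T$-forms appearing above. The uniform constant $c$ in Proposition \ref{prop:tel} is harmless because $\|\rho_i\|,\|\sigma_i\|,\|\varphi^{(\pm u)}\|$ are all bounded independently of $u,v$.

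The main obstacle I anticipate is bookkeeping the reduction from the \emph{signed} singular form with the genuine symbols $\widehat{\varphi^{(u)}},\widehat{\psi^{(v)}}$ to a \emph{positive} diagonal form to which Proposition \ref{prop:tel} applies: the telescoping identity was proved for the product form $\Theta^\T_{\rho_1,\sigma_2}+\Theta^\T_{\sigma_1,\rho_2}$ with a specific derivative relation, whereas here $\varphi$ and $\psi$ play asymmetric roles (only $\psi$ carries the ``mean-zero/square-function'' structure). I would resolve this by first Cauchy--Schwarzing away the $\varphi$-variable entirely --- bounding the two $\varphi$-convolution factors by $[\vartheta^2]_t(p-x)[\vartheta^2]_t(p-x')$ and integrating them out, which is exactly the kind of estimate handled in Lemma \ref{lemma:a-size} --- so that what remains in the $q$-variable is a genuine paraproduct-type average in $\psi$, and only there does one invoke the telescoping with $\sigma_i\leftrightarrow\psi$, $\rho_i\leftrightarrow\rho$. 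The other technical point, already localized in Lemmas \ref{lemma:error-Tc} and \ref{lemma:bdry-term} and in the proof of Proposition \ref{prop:tel}, is that all error and boundary contributions produced along the way are $\lesssim|R_\T|\prod_j M(F_j,\T)$; these are quoted directly. Assembling, one gets
\begin{align*}
\widetilde\Theta^\T_{\varphi,\psi}(F_1,F_2,F_3,F_4)\lesssim\sum_{j=1}^4\Theta^\T_{\Phi,\Psi}(F_j,F_j,F_j,F_j)\lesssim|R_\T|\prod_{j=1}^4 M(F_j,\T)^{?}
\end{align*}
where a final application of the normalization/homogeneity (as in the proof of Proposition \ref{prop:tel}, reducing to $M(F_j,\T)=1$) together with the AM--GM step converts the sum of fourth powers back into $\prod_{j=1}^4 M(F_j,\T)$, yielding \eqref{def:loc-vers-lambda}.
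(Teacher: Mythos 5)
There is a genuine gap, and it sits exactly at the step you treat as routine: you cannot apply Cauchy--Schwarz ``twice over the fibers, exactly as in Lemma \ref{lemma:a-size}'' to reduce to fully diagonal forms $\Theta^\T_{\Phi,\Psi}(F_j,F_j,F_j,F_j)$ with fixed kernels and only then telescope. The Cauchy--Schwarz step needs nonnegative weights, so to symmetrize in the $y,y'$ variables you must replace $\psi^{(\pm v)}$ by $|\psi^{(\pm v)}|$ (or a positive majorant), and at that moment all cancellation in the $q$-variable is destroyed: a diagonal form with positive kernels contributes about $|T_k|\prod_j M(F_j,\T)$ per scale, so its integral over $\Omega_\T$ is only bounded by $|R_\T|$ times the number of scales in the tree, which is unbounded; hence the second inequality in your final display fails. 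Conversely, if you keep the signed $\psi$ in the $q$-slots you cannot perform that second Cauchy--Schwarz at all. This is why the paper interleaves the two devices: one Cauchy--Schwarz in $x,x'$ with the positive weights $|\varphi^{(\pm u)}|$, keeping $\psi^{(v)}$ inside the square (yielding the only partially diagonal form $\Theta^{\T}_{|\varphi^{(u)}|,\psi^{(v)},|\varphi^{(-u)}|,\psi^{(v)}}(F_1,F_2,F_2,F_1)$), then a telescoping which trades $q$-cancellation for $p$-cancellation, then a second Cauchy--Schwarz in $y,y'$, then a second telescoping, and finally the positivity of $\Theta^{\T}_{g_\alpha,h_\gamma}(F_1,F_1,F_1,F_1)$.

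Two further points in your plan would also fail as stated. First, Proposition \ref{prop:tel} requires \emph{both} pairs $(\rho_1,\sigma_1)$ and $(\rho_2,\sigma_2)$ to satisfy the exact relation \eqref{eqn:FT}; dominating the $\varphi$-factors by $[\vartheta^2]_t$ and declaring that the $\varphi$-variable ``does not need a telescoping structure'' leaves a $p$-kernel that is not admissible in \eqref{identity-telescoping}. This is precisely why the paper dominates $|\varphi^{(\pm u)}|$ (and later $|\phi|$) by the superposition of Gaussians \eqref{sup-gauss}: the pair $(g_\alpha,h_\alpha)$ with $h_\alpha=\alpha (g_\alpha)'$ satisfies \eqref{eqn:FT} exactly, at the price of the seminorm bookkeeping $c\lesssim\alpha^{16}$ against the weight $\alpha^{-19}$. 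Second, \eqref{identity-telescoping} bounds a \emph{sum} of two forms; to extract a bound for the one you care about (cancellation in $q$) you must control its companion with cancellation in $p$, which for the tuple $(F_1,F_2,F_2,F_1)$ is not nonnegative. Handling it is exactly what the second Cauchy--Schwarz/telescoping round and the final sign observation are for; your sketch invokes the telescoping once and never addresses this companion term, so even with the diagonalization issue repaired the argument would not close.
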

The proof of Proposition \ref{prop:single-tree} proceeds in a very similar  way as the proof  of the $\L^4$ bound \eqref{def:main}.
 Besides replacing \cite[Lemma 3]{pd:L4} 
 with   Proposition \ref{prop:tel}, the only   modification is the choice of a faster  decaying superposition of the Gaussian exponential functions \eqref{sup-gauss}. For completeness we summarize all steps of the proof,   interested readers  are referred to \cite{pd:L4}. 
\begin{proof}
By homogeneity and scale-invariance we may   suppose $M(F_j,\T)=1$  and   $|R_{\mathcal{\T}}|=1$.
First we expand  the left-hand side of \eqref{def:loc-vers-lambda}  and use the triangle inequality to arrive at
\begin{align*}
 \int_{\Omega_\T} \int_{\R^2} \left | \int_{\R}F_1(x,y)F_2(x',y)[\psi^{(v)}]_{t}(q-y)  dy \right. \left. \int_{\R} F_3(x',y')F_4(x,y') [\psi^{(-v)}]_{t}(q-y') dy' \right| & \\
   [|\varphi^{(u)}|]_t(p-x) [|\varphi^{(-u)}|]_t(p-x') dxdx'dpdq\frac{dt}{t}. &
\end{align*}
By an application of the Cauchy-Schwarz inequality, this is bounded by
\begin{align*}
{\Theta}^{\T}_{|\varphi^{(u)}|,\psi^{(v)},|\varphi^{(-u)}|,\psi^{(v)}}(F_1,F_2,F_2,F_1)^{1/2} 
 {\Theta}^{\T}_{|\varphi^{(u)}|,\psi^{(-v)},|\varphi^{(-u)}| ,\psi^{(-v)}}(F_4,F_3,F_3,F_4)^{1/2}.
\end{align*}

As both terms are treated analogously, we consider  the first one only. We shall now apply the telescoping identity, for which we dominate $\varphi^{(\pm u)}$  with a superposition of   Gaussians. Denote the $\mathrm{L}^1$-normalized Gaussian exponential function rescaled by $\alpha >0$ by
\begin{align*}
g_{\alpha}(x)&:=\frac{1}{\sqrt{\pi}\alpha} e^{-\left( \frac{x}{\alpha}\right)^2}.
\end{align*}
Consider the superposition of the functions $g_\alpha$ given by
\begin{align} \label{sup-gauss}
\Phi(x):=\int_1^\infty \frac{1}{\alpha^{21}} e^{-\left( \frac{x}{\alpha}\right)^2} d\alpha = \frac{1}{\sqrt{\pi}} \int_1^\infty \frac{1}{\alpha^{20}} g_\alpha(x) d\alpha. 
\end{align}
For large $x$ we have
 $\Phi(x) \sim x^{20}$, which   can be seen by the change of variables $\alpha' = (x/\alpha)^2$ and by inductive integration by parts.  
The power of $\alpha$ is now larger as in \cite{pd:L4}, as due to Proposition \ref{prop:tel} we need control over higher Schwartz seminorms of $g_\alpha$.

Since $\varphi^{(\pm u)}\in \S(\R^2)$, we can bound it by $\Phi$ times a  positive constant, which is uniform  in $u$. 
By positivity of
\begin{align*}
{\Theta}^{\T}_{|\varphi^{(u)}|,\psi^{(v)},|\varphi^{(-u)}|,\psi^{(v)}}(F_1,F_2,F_2,F_1) =  \int_{\Omega_\T} \int_{\R^2}  \Big ( \int_{\R} F_1(x,y)F_2(x',y)  [\psi^{(v)}]_{t}(q-y)  dy \Big )^2 &\\
  [|\varphi^{(u)}|]_t(p-x) \, [|\varphi^{(u)}|]_t(p-x') dxdx'dpdq\frac{dt}{t},&
\end{align*}
we can estimate    this term up to a constant by    
\begin{align*}
  \int_1^\infty\int_1^\infty \Theta^{\T}_{g_\alpha,\psi^{(v)},g_\beta ,\psi^{(v)}}(F_1,F_2,F_2,F_1) 
\frac{d\alpha}{\alpha^{20}} \frac{d\beta}{\beta^{20}}.
\end{align*} 
We split the integration into the regions $\alpha\geq \beta$ and $\alpha < \beta$.
By symmetry it suffices to estimate the region $\alpha\geq \beta$ only, on which 
$\beta g_\beta \leq \alpha g_\alpha$
for $\alpha,\beta \geq 1$. This 
leaves us with 
$$\int_1^\infty \Theta^{\T}_{g_\alpha,\psi^{(v)}}(F_1,F_2,F_2,F_1) \frac{d\alpha}{\alpha^{19}}.$$

Now we are ready to apply Proposition \ref{prop:tel} with 
$(\rho_1,\sigma_1)=(g_\alpha,h_\alpha)$ and $(\rho_1,\sigma_2)=(\phi,\psi^{(v)})$, where $h_{\alpha}(x):={\alpha}(g_{\alpha})'(x)$ and 
\begin{align*}
\widehat{\phi}(\xi):=\left( \int_{\xi}^\infty | \widehat{\psi^{(v)}}(\tau) |^2 \frac{d\tau}{\tau} \right)^{1/2},
\end{align*}
which is a Schwartz function by our condition on $\psi$.
Proposition \ref{prop:tel} yields
\begin{align} \label{after1tel} 
{\Theta}^{\T}_{g_\alpha,\psi^{(v)}}(F_1,F_2,F_2,F_1)   & \lesssim - {\Theta}^{\T}_{h_\alpha,{\phi}}(F_1,F_2,F_2,F_1) +  c
\end{align}
with $c= \|g_\alpha\|^2 \|\psi^{(v)}\|^2 + \|\phi\|^2 \|h_\alpha\|^2 + \|g_\alpha\|^2 \|\phi\|^2 \lesssim \alpha^{16}$. 
Thus it remains to estimate the form on the right-hand side of  \eqref{after1tel}. 

In the second iteration of the procedure we 
 bound
$|{\Theta}_{h_\alpha,\phi}^{\T}(F_1,F_2,F_2,F_1)| $ by
\begin{align*}
  \int_{\Omega_\T} \int_{\R^2}  \left | \int_{\R}F_1(x,y)F_1(x,y')[h_{\alpha}]_t(p-x)  dx \right.
 \left.\int_{\R} F_2(x',y')F_2(x',y) [h_{\alpha}]_t(p-x') dx' \right| & \\
 [|\phi|]_{t}(q-y)[|\phi|]_{t}(q-y') dydy'dpdq\frac{dt}{t}. &
\end{align*} 
Again we apply the Cauchy-Schwarz inequality and arrive to
\begin{align*}
|{\Theta}^{\T}_{h_\alpha,{\phi}}(F_1,F_2,F_2,F_1)| \leq {\Theta}^{\T}_{h_\alpha,|\phi|}(F_1,F_1,F_1,F_1)^{1/2}
{\Theta}^{\T}_{{h}_\alpha,|\phi|}(F_2,F_2,F_2,F_2)^{1/2}
\end{align*}
Dominating  the  rapidly decaying   $|\phi|$ by a positive constant times $\Phi$  gives 
\begin{align*}
{\Theta}^{\T}_{h_\alpha,|\phi|}(F_1,F_1,F_1,F_1) \lesssim 
\int_1^\infty \int_1^\infty \Theta^{\T}_{h_\alpha,g_\gamma,{h}_\alpha,g_\delta}(F_1,F_1,F_1,F_1)\frac{d\gamma}{\gamma^{20}} \frac{d\delta}{\delta^{20}}.
\end{align*}
As before, by symmetry  this reduces to having to estimate
$$\int_1^\infty {\Theta}^{\T}_{h_\alpha,g_\gamma}(F_1,F_1,F_1,F_1) \frac{d\gamma}{\gamma^{19}}.$$
Now we apply Proposition \ref{prop:tel} to the pairs $(\rho_1,\sigma_1)=(g_\alpha,h_\alpha)$ and $(\rho_2,\sigma_2)=(g_\gamma,h_\gamma)$, giving
\begin{align*}
{\Theta}^{\T}_{h_\alpha,g_\gamma}(F_1,F_1,F_1,F_1) & \lesssim    - {\Theta}^{\T}_{g_\alpha,h_\gamma}(F_1,F_1,F_1,F_1) +  c
\end{align*}
with $c= \|g_\alpha\|^2 \|h_\gamma\|^2 + \|g_\gamma\|^2 \|h_\alpha\|^2 + \|g_\alpha\|^2 \|g_\gamma\|^2 \lesssim \alpha^{16}\gamma^{16}$. 
Finally observe that 
\begin{align*}
 {\Theta}^{\T}_{g_\alpha,h_\gamma}(F_1,F_1,F_1,F_1)\geq 0, 
\end{align*}
which can be seen by writing it as an integral of a square   multiplied with $g_\alpha \geq 0$. 
Thus, 
$${\Theta}^{\T}_{h_\alpha,g_\gamma}(F_1,F_1,F_1,F_1) \leq 1. $$ 
This concludes the proof in view of our normalization.
\end{proof}

\section{Completing the proof of Theorem \ref{thm:GRTmain}} 
Now we are ready to establish the restricted type estimate from Theorem \ref{thm:GRTmain}. We adapt the approach of \cite{th:wpa} and also rely on \cite{th:phd}. 
\begin{proof}[Proof of Theorem \ref{thm:GRTmain}]
First note that by quadrilinearity of $\Lambda_{\varphi,\psi}^N$ it suffices to prove the theorem for positive functions $F_j$, as otherwise we split them into real and imaginary, positive and negative parts.

For $j=1,2,3,4$ let  $\alpha_j$ be  such that $-1/2 \leq  \alpha_j \leq 1/2$  and $\alpha_1+\alpha_2+\alpha_3+\alpha_4 = 1$.  For each $j$ let $E_j \subseteq \R^2$ be   measurable. Without loss of generality we may assume    $|E_1|$ is maximal among the $|E_j|$.  
Note that for $a=2^k$ we have the scaling identity
\begin{align*} 
\Lambda^{N}_{\varphi,\psi}(F_1,F_2,F_3,F_4) = a^2\Lambda^{N/a}_{\varphi,\psi}(F_1(a\cdot ),F_2(a\cdot ),F_3(a\cdot ),F_4(a\cdot ) ).
\end{align*}  
Since our bound will be independent of $N$, by   $\sum_j \alpha_j=1$  we may then  suppose $1\leq |E_1|\leq 4$. All squares  which we   consider in this section are assumed to  have their side-lengths in the interval $[2^{-N},2^N]$.

For $F$ on $\R^2$ we denote the quadratic Hardy-Littlewood maximal function by
$$\M(F) :=\sup_{S  }\ \Big ( \frac{1}{|S|}\int_S F^2 \Big )^{1/2}\1_S , $$
where the supremum is taken over all (not necessarily dyadic) squares in $\R^2$ with sides parallel to the coordinate axes. From now on, by the word ''average'' we will always mean the second power average as in the definition of $\M(F)$. Define the exceptional set
$$H:=\bigcup_{j=1}^4 \{\M(|E_j|^{-1/2}\mathbf{1}_{E_j}) >2^{10}\}.$$
By the Hardy-Littlewood maximal theorem we have $|H|\leq 1/18$. 
Let $\mathcal{R}$ be the set of all dyadic squares $R\subseteq H$ which are maximal with respect to   set inclusion. Denote by $3R$ the square with the same center as $R$ but with three times the sidelength of $R$. We
set ${E}'_1:=E_1\setminus \cup_{R\in \mathcal{R}}3R$. Then $2|{E}'_1| \geq  |E_1|$.

Suppose we are given four  functions $F_j$  with   $|F_j|\leq \1_{E_j}$ for all $j$ and  
$|F_1|\leq \1_{E_1'}$. Since $\alpha_j\leq 1/2$ and $|E_1|\leq 4$, it suffices to prove 
$$|{\Lambda}^N_{\varphi,\psi}(F_1,F_2,F_3,F_4)| \lesssim |E_1|^{1/2}|E_2|^{1/2}|E_3|^{1/2}|E_4|^{1/2}.$$
If we set $G_j:=|E_j|^{-1/2}F_j$, then the inequality we need to establish reads
$$|{\Lambda}^N_{\varphi,\psi}(G_1,G_2,G_3,G_4)| \lesssim 1.$$
Observe that $\|G_j\|_{\L^2(\R^2)} \leq 1$ for all $j$.

We split $\R^2\times[2^{-N},2^N] $ into the regions $\Omega_{\{S\}}=S\times [\ell(S)/2,\ell(S)]$, $S\in \D$,  and consider the cases $S\subseteq H$ and $S\not\subseteq H$.  
By the triangle inequality we   estimate
\begin{align*}
 |{\Lambda^N_{\varphi,\psi}} | \leq
\sum_{S\subseteq H}\widetilde{\Theta}^{\{S\}}_{\varphi,\psi} + \sum_{S\not\subseteq H}\widetilde{\Theta}^{\{S\}}_{\varphi,\psi}. 
 \end{align*}

First we consider the sum over   $S\not\subseteq H$. 
For $k \in \Z$ let $\S_k$   be the set of all dyadic squares $S$ 
for which  
\begin{align*}
 2^{k-1} < \max_{j\in \{1,2,3,4\}} \;\sup_{S'\supseteq S}  \Big( \frac{1}{|S'|}  \int_{S'} G_j^2 \Big )^{1/2}  \leq 2^{k}.
\end{align*}
The supremum is taken over all (not necessarily dyadic) squares $S'\supseteq S$ in $\R^2$   with sides parallel to the coordinate axes.
Denote by $\mathcal{R}_k$ the collection of the maximal squares in $\S_k$ with respect to   set inclusion. 
For  $R\in \mathcal{R}_k$ we define 
$$\T_R:=\{S\in \S_k : S \subseteq R\},$$
which  is a   convex tree with the root $R$. Convexity follows from monotonicity of the supremum. 
By construction, if $S\not\subseteq H$,  for each $j$ the average of $|E_j|^{-1/2}\1_{E_j}$ over $S$ is no greater than $2^{10}$. Thus, the same holds for the average of $G_j$ over $S$. 
Therefore,   $$\{S: S\not\subseteq H\}\subseteq \bigcup_{k\leq 10}\S_{k}$$ and  we can split the summation as
\begin{align*}
\sum_{S\not\subseteq H} \widetilde{\Theta}_{\varphi,\psi}^{\{S\}} \leq   \sum_{k\leq 10}\sum_{R\in \mathcal{R}_k} \sum_{S\in \T_{R}} \widetilde{\Theta}_{\varphi,\psi}^{\{S\}}  = \sum_{k\leq 10}\sum_{R\in \mathcal{R}_k} \widetilde{\Theta}_{\varphi,\psi}^{\T_{R}} .
\end{align*}
For the forms on the right-hand side   we have  by Proposition \ref{prop:single-tree} that
\begin{align} \label{eqn:ste}
\widetilde{\Theta}_{\varphi,\psi}^{\T_{R}}(G_1,G_2,G_3,G_4)\lesssim  |R| \prod_{j=1}^4 M(G_j, \T_{R}).
\end{align}

To estimate  the right-hand side of \eqref{eqn:ste} we   discretize the function $\theta$ by a standard approximation with characteristic functions of balls of radius at least $1$.  We now sketch the required argument. 
Denote by $B_r$   the ball of radius $r$ centered at $0$ in $\R^2$. We  write  
\begin{align*}G_j^2*[\theta]_t & =  G_j^2*[\theta\1_{B_{1}}]_t + G_j^2*[\theta\1_{B_1^c}]_t.
\end{align*}  
Let $(p,q,t)\in  {S}\times [\ell(S)/2, \ell(S)] \subseteq    {\Omega_{\T_R}}$ and assume $(p,q)=0$.
On $B_1$ we have
\begin{align}\label{bound:on-b}
G_j^2*[\theta\1_{B_{1}}]_t(0) \lesssim 
\|\theta\|_{\L^\infty(\R^2)}  \frac{1}{(2t)^2} \int_{[-t,t]^2 } G_j^2 \lesssim \frac{1}{(2\ell(S))^2} \int_{[-\ell(S),\ell(S)]^2 } G_j^2 
\lesssim 2^{2k}.
\end{align}
For the part on $B_1^c$ we    consider the  function $ \theta\1_{B_1^c} + \frac{1}{2}\1_{B_1}$. It dominates $\theta\1_{B_1^c}$,  is positive and radially decreasing. Therefore it can be  approximated pointwise by 
a monotonously increasing sequence of
simple functions of the form  
\begin{align*}
E=\sum_{i=1}^{n} a_i \mathbf{1}_{B_{r_i}},\;\, r_i \geq 1,\; a_i >0.
\end{align*} 
For $E$ we  have, using $t\sim \ell(S)$, that
\begin{align*}  G_j^2*[E]_t(0)   \lesssim \sum_{i=1}^n a_i |B_{r_i}| \frac{1}{(r_i\ell(S))^2} \int_{[-r_i\ell(S),r_i\ell(S)]^2 } G_j^2  \lesssim  \| \theta \|_{\L^1(\R^2)} 2^{2k}.
 \end{align*}
This implies the estimate  
\begin{align}\label{bound:on-bc}
G_j^2*[\theta\1_{B^c_{1}}]_t(0) \lesssim 2^{2k}.
\end{align}
By a translation argument, the same bound holds at any $(p,q,t)\in \Omega_{\T_R}$. 
Therefore, by \eqref{bound:on-b} and \eqref{bound:on-bc}, we have $M(G_j,\T_R)\lesssim 2^k$ for each $j$  and hence
\begin{align}\label{eqn:not-in-h}
\sum_{S\not\subseteq H}\widetilde{\Theta}_{\varphi,\psi}^{\{S\}}(G_1,G_2,G_3,G_4) \lesssim \sum_{k\leq 10} 2^{4k}\sum_{R\in \mathcal{R}_k}|R|.
\end{align}

It remains to sum up the right-hand side of the last display. Since for $R\in \mathcal{R}_k$ there is an index $j$ such that on $R$ we have $\M(G_{j}) >2^{k-1}$, by maximality of the squares in $\mathcal{R}_k$   
\begin{align*}
\sum_{R\in \mathcal{R}_k} |R| = \Big | \bigcup_{R\in \mathcal{R}_k} R \;\Big | \leq  \sum_{j=1}^4|\{\M(G_{j}) > 2^{k-1}\}|.
\end{align*}
 By the Hardy-Littlewood maximal theorem and $\|G_{j}\|_{\L^2(\R^2)}\leq 1$, for each $j$ we have  $|\{\M(G_{j}) > 2^{k-1}\}|\lesssim 2^{-2k}$.
Thus, \eqref{eqn:not-in-h} is up to a constant dominated by
$$  \sum_{k\leq 10} 2^{2k} \lesssim 1.$$
This establishes the desired estimate for $S\not\subseteq H$.

Now consider the sum over all dyadic squares   $S$ contained in $H$. 
Every $S\subseteq H$  is  contained in one maximal dyadic square $R  \in \mathcal{R}$.
Let $\S_{R,k}$ be the set of dyadic squares $S$ which are $k$  generations below $R \in \mathcal{R}$. That is, $2^k\ell(S)=\ell(R)$. We split
\begin{align*}
\sum_{S\subseteq H} \widetilde{\Theta}_{\varphi,\psi}^{\{S\}}& = \sum_{R\in \mathcal{R}}\sum_{ k\geq 0}\sum_{S\in \S_{R,k}} \widetilde{\Theta}_{\varphi,\psi}^{\{S\}}. 
\end{align*}
For $S\in \S_{R,k}$ we   expand $\widetilde{\Theta}_{\varphi,\psi}^{\{S\}}(G_1,G_2,G_3,G_4)$ and estimate $|\varphi^{(u)}|,|\psi^{(v)}| \lesssim \vartheta^4$ to arrive at   
\begin{align}\label{term:s-in-h}\nonumber
\int_{\ell(S)/2}^{\ell(S)}   \int_S \int_{\R^4} \F(G_1,G_2,G_3,G_4)(x,y,x', y') [\vartheta\otimes \vartheta\otimes\vartheta\otimes\vartheta]_t (p-x,q-y,p-x',q- &y')  \\ 
  \theta^2(t^{-1}(p-x,q-y))\, dxdy dx'dy'dpdq \frac{dt}{t}.&
\end{align}
Since $G_1$ is supported on the complement of  $3R$,  we have $|(p,q)-(x,y)|\geq \ell(R)$ for $(p,q) \in S$. We also have $\ell(R)=2^k\ell(S)\sim 2^kt$, therefore  $\theta^2(t^{-1}(p-x,q-y)) \lesssim 2^{-8k}$. 
Applying Lemma \ref{lemma:a-size}, the term \eqref{term:s-in-h} is then up to a constant dominated by 
\begin{align*} 
2^{-8k}|S|\prod_{j=1}^4M(G_j,\{S\}).
\end{align*}
Denote by $R'$ the parent of $R$. For  each $j$ we have 
$$M(G_j,\{S\}) \lesssim 2^k   M(G_j,\{R'\})  \lesssim 2^k. $$
The last inequality follows by the same  approximation argument as before and using that the averages of $G_j$ over squares containing $R'$ are less than $2^{10}$,
which is true by maximality of $R$.    This establishes
\begin{align*}
\sum_{S\subseteq H} \widetilde{\Theta}_{\varphi,\psi}^{\{S\}}(G_1,G_2,G_3,G_4) \lesssim  \sum_{R\in \mathcal{R}}\sum_{ k\geq 0}\sum_{S\in \S_{R,k}} 2^{-4k}|S|. 
\end{align*}
Since $\sum_{S\in \S_{R,k}}|S| \leq |R|$, the last display is estimated by
 \begin{align*}
 \sum_{R\in \mathcal{R}}|R|\sum_{k\geq 0} 2^{-4k}  \lesssim |H| \lesssim 1.
 \end{align*}
 For the second to last inequality we summed the geometric series  and used  disjointness of $R\in \mathcal{R}$. In the last step we used $|H|\leq 1/2$. 
\end{proof}

\end{document}